\DeclareSymbolFont{cyrletters}{OT2}{wncyr}{m}{n}
\DeclareMathSymbol{\sh}{\mathbin}{cyrletters}{"78}
\let\mr=\MR
\let\MR=\mr
\numberwithin{equation}{section}
\newtheorem{theorem}{Theorem}
\newtheorem{proposition}{Proposition}
\newtheorem{corollary}{Corollary}
\newtheorem{lemma}{Lemma}
\newtheorem{definition}{Definition}
\newcommand{\F}{\mathbb{F}}
\newcommand{\R}{\mathbb{R}}
\newcommand{\Wedge}{\Lambda}
\newcommand{\<}{\langle}
\renewcommand{\>}{\rangle}
\DeclareMathOperator{\Hom}{Hom}
\DeclareMathOperator{\End}{End}
\DeclareMathOperator{\ad}{ad}
\newcommand{\Ainf}{$A_\infty$}
\newcommand{\Linf}{$L_\infty$}
\newcommand{\h}{\mathbf{h}}
\newcommand{\p}{\mathbf{p}}
\newcommand{\f}{\mathbf{f}}
\newcommand{\g}{\mathbf{g}}
\newcommand{\CA}{\mathcal{A}}
\newcommand{\CD}{\mathcal{D}}
\newcommand{\ddd}{\check{\vphantom{\partial}\partial}}
\newcommand{\hhh}{\check{\vphantom{\partial}\mathsf{h}}}
\newcommand{\fff}{\check{\vphantom{\partial}\mathsf{f}}}
\renewcommand{\ggg}{\check{\vphantom{\partial}\mathsf{g}}}
\newcommand{\dd}{\partial}
\newcommand{\hh}{\mathsf{h}}
\newcommand{\ff}{\mathsf{f}}
\renewcommand{\gg}{\mathsf{g}}
\DeclareMathOperator{\sgn}{sgn}
\DeclareMathOperator{\Cone}{Cone}
\title{Koszul duality and the Poincar\'e--Birkhoff--Witt theorem}
\begin{document}

\emsauthor{1}{
	\givenname{Ezra}
	\surname{Getzler}
	\mrid{210138}
	\orcid{0000-0002-5850-7723}}{Ezra~Getzler}

\Emsaffil{1}{
	\department{Department of Mathematics}
	\organisation{Northwestern University}
	\address{2033 Sheridan Road}
	\zip{60208}
	\city{Evanston}
	\country{USA}
	\affemail{getzler@northwestern.edu}
      }

\dedication{In memoriam Murray Gerstenhaber (1927-2024)}

\classification[13D03,16S37,17B35]{16E40}

\keywords{Koszul duality; universal enveloping algebra;
  Poincar\'e--Birkhoff--Witt theorem; homological perturbation theory;
  \Linf-algebras}

\begin{abstract}
  Using a homotopy introduced by de Wilde and Lecomte and homological
  perturbation theory for \Ainf-algebras, we give an explicit proof
  that the universal enveloping algebra $UL$ of a differential graded
  Lie algebra $L$ is Koszul, via an explicit contracting homotopy from
  the cobar construction $\Omega CL$ of the Chevalley--Eilenberg chain
  coalgebra $CL$ of $L$ to $UL$. This may be viewed as an extension of
  the Poincar\'e--Birkhoff--Witt Theorem to \Linf-algebras.
\end{abstract}

\maketitle

\thispagestyle{empty}

If $L$ is a differential graded (dg) Lie algebra, there is a
codifferential on the exterior coalgebra $\Wedge L$ defined using the
Lie bracket and differential of $L$. With this differential,
$\Wedge L$ becomes a cocommutative dg coalgebra $CL$, called the
Chevalley--Eilenberg coalgebra of $L$.

There is a quasi-isomorphism of cocommutative dg bialgebras
$f:\Omega CL\to UL$ from the cobar construction $\Omega CL$ of $CL$ to the
universal enveloping algebra $UL$, split by a morphism of
cocommutative dg coalgebras $g:UL\to\Omega CL$. Applying the functor of
primitives $P$ to $f$, we obtain a quasi-isomorphism of dg Lie
algebras $Pf:P\Omega CL\to L$: this functorial resolution of $L$ was
introduced by Quillen \cite{Quillen}.

More generally, if $L$ is an \Linf-algebra, there is a codifferential
on $\Wedge L$ defined using the higher brackets of $L$, that makes
$\Wedge L$ into a cocommutative dg coalgebra $CL$. A natural choice
for the universal enveloping algebra of $L$ is $\Omega CL$ (Hinich and
Schechtman \cite{HS}). This is a cocommutative dg bialgebra, and as we
saw above, in the special case that $L$ is a dg Lie algebra it is
quasi-isomorphic to the universal enveloping algebra $UL$.

It is an interesting problem to exhibit this quasi-isomorphism by
means of an explicit contracting homotopy. In the abelian case, this
becomes the problem of finding an explicit contracting homotopy from
$\Omega\Wedge V$, where $V$ is a cochain complex, to $SV$, the dg symmetric
algebra generated by $V$. A contracting homotopy from $\Omega CL$ to
$UL$ is then obtained by homological perturbation theory (Baranovsky
\cite{Baranovsky}). The resulting identification of the complexes
underlying $SL$ and $UL$ may be viewed as an extension of the
Poincar\'e--Birkhoff--Witt theorem to \Linf-algebras.

Baranovsky demonstrated the existence of a contracting homotopy for
$\Omega\Wedge V$, with\-out giving an explicit formula for it. In fact,
such an explicit homotopy may be extracted from the work of de Wilde
and Lecomte \cite{dWL}. They construct a homotopy in the dual
situation, contracting the bar construction $BSV$ of the symmetric
algebra $SV$ to its Koszul dual, the exterior coalgebra $\Wedge V$
(though they restrict attention to the case that $V$ is a vector
space). Dualizing, we obtain a contracting homotopy from the cobar
construction $\Omega\Wedge V$ to the symmetric algebra $SV$. This homotopy
is natural: it is invariant under automorphisms of $V$. De~Wilde and
Lecomte construct this homotopy by a recursive procedure: one of the
new results in our paper is an explicit formula \eqref{explicit} for
the resulting homotopy.

Another approach to the construction of a contracting homotopy is due
to Halbout \cite{Halbout}. His homotopy extends to more general
function algebras, such as real (or complex) analytic functions. We
will see that the homotopy of de~Wilde and Lecomte extends to
Halbout's setting as well.

After the first version of this work was completed, an alternative
contracting homotopy was discovered by Dippell et al.\ \cite{DESW}. An
advantage of their homotopy over the one that we use is that its
origin is conceptually clearer: it is obtained by combining
contracting homotopies of the bar construction and the Koszul complex,
in an abstraction of Van~Est's method (see Meinrenken and Salazar
\cite{MS}). For completeness, we give a brief review of their
construction in the appendix.

The articles of de~Wilde and Lecomte \cite{dWL}, Halbout
\cite{Halbout} and Dippel et al.~\cite{DESW} consider the more general
setting of Hochschild complexes of chains and cochains with values in
modules over the symmetric algebra $SV$. We have not pursued these
applications in this article, but the respective generalizations are
easily guessed from the homotopy formulas that we present here.

The original motivation of this paper was to make the \Ainf-morphism
in Lemma~19 of Tsygan \cite{Tsygan} explicit: this is the last step in
his construction of a Gauss--Manin connection on periodic cyclic
homology of a deformation of \Ainf-algebras at the chain level, and
the only step for which no explicit formula is stated. We return to
this application in the last section of this paper.

\clearpage

\subsection*{Summary of Paper} \mbox{}

\S1 reviews the definitions of the symmetric algebra $SV$ and
exterior coalgebra $\Wedge V$.

\S2 discusses operations on the bar construction $BA$ of an
\Ainf-algebra $A$, parametrized by the bar construction $BG(A)$ of its
Gerstenhaber algebra $G(A)=C^*(A,A)$.

\S3 reviews homological perturbation theory for complexes, and using
the tensor trick of \cites{GLS,HK} for \Ainf-algebras.

\S4 studies the contracting homotopy from $BSV$ to $\Wedge V$.

\S5 studies the dual contracting homotopy from $\Omega \Wedge V$ to
$SV$, and by homological perturbation theory, from $\Omega CL$ to $UL$ for
an \Linf-algebra $L$. The same method is used in \S6 to make the
formula for the \Ainf-morphism in Lemma 19 of \cite{Tsygan} explicit.

In an appendix, we present the alternative contracting homotopy of
\cite{DESW}.

\section{The symmetric algebra and exterior coalgebra of a cochain complex}

We work with complexes over a field $\F$ of characteristic zero,
graded cohomologically, so that the differential has degree $1$. We
use the notation $sV$ for the suspension of a complex:
$(sV)^i=V^{i+1}$. All tensor products are over $\F$ unless otherwise
indicated.

Koszul duality is streamlined by working in the category of non-unital
dg algebras and \Ainf-algebras, and non-counital coalgebras. For this
reason, we define the symmetric algebra $SV$ of a complex $V$ to be
\begin{equation*}
  SV = \bigoplus_{k=1}^\infty S^kV ,
\end{equation*}
and the exterior coalgebra $\Wedge V$ of $V$ to be
\begin{equation*}
  \Wedge V = \bigoplus_{k=1}^\infty \Wedge_kV ,
\end{equation*}
where $\Wedge_kV\cong S^ksV$.

To a non-counital dg coalgebra $C$, we may associate a counital
coalgebra $C_+=C\oplus\F$, with counit $\epsilon:C_+\to\F$ equal to projection on
the summand $\F$, and codifferential extended to $C_+$ by vanishing on
$\F\subset C_+$.The coproduct $\Delta_+$ on $C_+$ is given by the formula
\begin{equation*}
  \Delta_+ a = a\otimes 1 + \Delta a + 1 \otimes a - \epsilon(a) \, 1 \otimes 1 .
\end{equation*}
The coproduct $\Delta$ on the non-counital coalgebra is sometimes called
the reduced coproduct. We make use of Sweedler's notation,
abbreviating the coproduct of a coalgebra
\begin{equation*}
  \Delta a = \sum_i a^{(1)}_i \otimes a^{(2)}_i
\end{equation*}
to
\begin{equation*}
  \Delta a = a^{(1)} \otimes a^{(2)} .
\end{equation*}
An element $x\in C$ of a non-counital dg coalgebra is primitive if its
coproduct $\Delta x \in C\otimes C$ vanishes: the set $PC\subset C$ of primitive elements
is a subcomplex of $C$.

Likewise, if $A$ is a dg algebra, let $A^+=A\oplus\F$, with product
\begin{equation*}
  (a+\alpha1)(b+\beta1)=(ab+\alpha b+\beta a)+(\alpha\beta)1 ,
\end{equation*}
and differential $\delta(a+\alpha1)=\delta a$. In particular,
\begin{equation*}
  SV^+ = \bigoplus_{k=0}^\infty S^kV ,
\end{equation*}
and
\begin{equation*}
  \Wedge V_+ = \bigoplus_{k=0}^\infty \Wedge_kV .
\end{equation*}

Working with non-unital algebras and non-counital coalgebras has the
disadvantage of rendering the definition of a bialgebra less
intuitive. The compatibility between the product and coproduct may be
written
\begin{equation*}
  \Delta(ab) = ( a \otimes 1 + \Delta a + 1 \otimes a )  ( b \otimes 1 + \Delta b + 1 \otimes b ) - ab \otimes 1 -
  1 \otimes ab .
\end{equation*}
In Sweedler notation, this becomes
\begin{multline*}
  (ab)_{(1)} \otimes (ab)_{(2)} - a_{(1)}b_{(1)} \otimes a_{(2)}b_{(2)} \\
  = a \otimes b + a_{(1)}b \otimes a_{(2)} + a_{(1)} \otimes a_{(2)}b + ab_{(1)} \otimes
  b_{(2)} + b_{(1)} \otimes ab_{(2)} + b \otimes a ,
\end{multline*}
or in the graded case,
\begin{multline*}
  (ab)_{(1)} \otimes (ab)_{(2)} - (-1)^{|a_{(2)}||b_{(1)}|} a_{(1)}b_{(1)} \otimes
  a_{(2)}b_{(2)} \\
  \begin{aligned}
    &= a \otimes b + (-1)^{|a_{(2)}||b|} a_{(1)}b \otimes a_{(2)} + a_{(1)} \otimes
    a_{(2)}b \\
    &\quad + ab_{(1)} \otimes b_{(2)} + (-1)^{|a||b_{(1)}|} b_{(1)} \otimes ab_{(2)} +
    (-1)^{|a||b|} b \otimes a .
  \end{aligned}
\end{multline*}

Both $SV$ and $\Wedge V$ are bialgebras: the coproduct of $SV$ is the
coshuffle coproduct, characterized by $PSV=V=S^1V$, and the product of
$\Wedge V$ is the wedge product
$a\wedge b:\Wedge_kV \otimes \Wedge_\ell V\to\Wedge_{k+\ell}V$.

A twisting cochain from a dg coalgebra $C$ to a dg algebra $A$ is a
morphism $\mu:C\to A$ of degree $1$ satisfying the Maurer--Cartan equation
$d\mu+\mu^2=0$, where $\mu^2$ is the composition $C\to
C^{\otimes2}\xrightarrow{\mu\otimes\mu} A^{\otimes2}\to A$.

A coderivation of a coalgebra $C$ is a map $\delta:C\to C$ such that
\begin{equation*}
  \Delta\delta = (\delta\otimes1+1\otimes\delta)\Delta .
\end{equation*}
A codifferential is a coderivation $\delta$ of degree $1$ such that
$\delta^2=0$.

An \Linf-structure on a complex $L$ is a codifferential $\delta$ on
$\Wedge L$, that is, a coderivation of degree $1$ whose square
vanishes. We have
\begin{equation*}
  \delta = d + \sum_{k=2}^\infty \delta_k ,
\end{equation*}
where $\delta_k$ is the component of $\delta$ that maps
$\Wedge_\ell L$ to $\Wedge_{\ell-k+1}L$. These coderivations are given by
the formulas
\begin{multline*}
  d \bigl( v_1 \wedge \cdots \wedge v_\ell \bigr) = \sum_{j=1}^\ell
  (-1)^{|v_1|+\cdots+|v_{j-1}|+j-1} v_1 \wedge \ldots \wedge dv_j \wedge \ldots \wedge v_\ell \\
  \shoveleft{\delta_k \bigl( v_1 \wedge \cdots \wedge v_\ell \bigr) = \frac{1}{\ell!}
    \binom{\ell}{k} \sum_{\pi\in S_\ell} \sgn(\pi) \, (-1)^{\sum_{i=1}^k(k-i)|v_i|}} \\
  [v_{\pi(1)},\ldots,v_{\pi(k)}] \wedge v_{\pi(k+1)} \wedge \ldots \wedge v_{\pi(\ell)} ,
\end{multline*}
where $[v_1,\ldots,v_k]$ is a graded antisymmetric $k$-linear bracket of
degree $2-k$, and $\sgn(\pi)$ is the sign associated to the action of
the permutation $\pi$ on $sv_1\otimes\cdots\otimes sv_\ell$:
\begin{equation}
  \label{sgn}
  \sgn(\pi) = (-1)^{\sum_{\{i<j\mid\pi(i)>\pi(j)\}} (|v_{\pi(i)}|+1)(|v_{\pi(j)}|+1)} .
\end{equation}
The coderivation $d$ may be viewed as the coderivation $\delta_1$
associated to the $1$-linear bracket $[v]=dv$.

The formula $\delta^2=0$ imposes quadratic equations among the brackets and
the differential $d$, which in the case where $[v_1,\ldots,v_k]$ vanishes
for $k>2$ become the usual axioms for a dg Lie algebra. This
codifferential makes $\Wedge L$ into a dg coalgebra denoted $CL$, the
(reduced) Chevalley--Eilenberg complex of chains of the \Linf-algebra
$L$. In the special case that $L$ is a Lie algebra, this recovers the
complex introduced by Chevalley and Eilenberg.

\newcommand{\Abar}{\bar{A}}
\newcommand{\Cbar}{\bar{C}}

\section{The bar construction of an \Ainf-algebra}

The bar construction of a complex $A$ is the graded vector space
\begin{equation*}
  BA = \bigoplus_{k=1}^\infty B_kA ,
\end{equation*}
where $B_kA = (sA)^{\otimes k}$. We denote the tensor product
\begin{equation*}
  sa_1 \otimes \cdots \otimes sa_k \in B_kA
\end{equation*}
by the bar notation $[a_1|\ldots|a_k]$ of Eilenberg and Maclane (from which
the construction derives its name). For $0\le j\le k$, let
\begin{equation*}
  \omega_j = |a_1|+\cdots+|a_j|-j ;
\end{equation*}
this is the degree of the element $[a_1|\ldots|a_j]$ of $B_jA$.

The bar construction is a dg coalgebra, with coproduct
$\Delta : BA \to BA \otimes BA$ and codifferential
$\delta : BA \to BA$ given by the formulas
\begin{align*}
  \Delta[a_1|\ldots|a_k] &= \sum_{0<j<k} [a_1|\ldots|a_j] \otimes [a_{j+1}|\ldots|a_k] , \\
  \delta[a_1|\ldots|a_k] &= \sum_{0\le j\le k} (-1)^{\omega_{j-1}} \, [a_1|\ldots|da_j|\ldots|a_k] ,
\end{align*}

The counital form of the bar construction is
\begin{equation*}
B_+A = BA \oplus \F = \bigoplus_{k=0}^\infty B_kA .
\end{equation*}
The counit $\epsilon:B_+A\to\F$ projects from $B_+A$ to $B_0A\cong\F$, with basis
vector $[~]$.

A Hochschild cochain $D\in C^*(A,A)$ is a map $D:B_+A\to A$. Denote the
component of $D$ in $\Hom(B_kA,A)$ by $D_{(k)}$. We denote the graded
vector space $C^*(A,A)$ of Hochschild cochains by $G(A)$.\footnote{The
  letter $G$ stands for Gerstenhaber \cite{Gerstenhaber}, who made the
  first close study of the algebraic properties of $G(A)$.}

There is a bijection between coderivations $\delta$ of $BA$ and Hochschild
cochains $D\in G(A)$ such that $D_{(0)}=0$, given by composition with
projection from $BA$ to $B_1A\cong sA$ followed by the degree one
isomorphism from $sA$ to $A$. We denote the coderivation corresponding
to a Hochschild cochain $D$ by $\delta(D)$. In particular,
$|\delta(D)|=|D|-1$. The coderivation $\delta(D)$ is given by the formula
\begin{multline*}
  \delta(D)[a_1|\ldots|a_k] = \sum_{0\le i<j\le k} (-1)^{(|D|-1)\omega_i} \,
  [a_1|\ldots|D[a_{i+1}|\ldots|a_j]|\ldots|a_k] .
\end{multline*}
Gerstenhaber \cite{Gerstenhaber} introduced the bilinear operation
\begin{equation*}
  (D\circ E) [a_1|\ldots|a_k] = \sum_{0\le i<j\le k} (-1)^{(|E|-1)\omega_i} \,
  D[a_1|\ldots|E[a_{i+1}|\ldots|a_j]| \ldots|a_k]
\end{equation*}
on $G(A)$. The graded commutator
\begin{equation*}
  [D,E] = D \circ E - (-1)^{(|D|-1)(|E|-1)} E\circ D
\end{equation*}
is a graded Lie bracket on $G(A)$, called the Gerstenhaber bracket. We
have
\begin{equation*}
  [\delta(D),\delta(E)] = \delta([D,E]) .
\end{equation*}

\begin{definition}
  An \Ainf-algebra structure on a graded vector space $A$ is a
  codifferential $\delta$ on $BA$.
\end{definition}

Denote the Hochschild cochain associated to the \Ainf-algebra
structure by $m\in G(A)$, so that $\delta=\delta(m)$. Since
$\delta(m)^2=\delta(m\circ m)$, the equation $\delta^2=0$ amounts to the sequence of
quadratic relations $m\circ m=0$ among the homogeneous components
$\{m_{(k)}\}_{k>0}$ of $m$. The differential
\begin{equation*}
  \delta D = [m,D]
\end{equation*}
on $G(A)$ makes the Gerstenhaber algebra of an \Ainf-algebra into a dg
Lie algebra.

The twisting cochain $BA\to A$ given by projection from $BA$ to
$B_1A\cong sA$ followed by the degree one morphism from $sA$ to $A$ is the
universal twisting cochain: the twisting cochains from a dg coalgebra
$C$ to $A$ are in bijection with morphisms of dg coalgebras from $C$
to $BA$.

\begin{definition}
  A morphism of \Ainf-algebras is a morphism of the associated dg
  coalgebras $\f:BA_1\to BA_2$.
\end{definition}

The components $\f_{(k)}:B_kA_1\to A_2$ of a morphism
$\f:BA_1\to BA_2$ of \Ainf-algebras are the compositions of the map
\begin{equation*}
  B_kA_1 \hookrightarrow BA_1 \xrightarrow{\f} BA_2
\end{equation*}
with the universal twisting cochain $BA_2\to A_2$. Together, these
determine $\f$, by the formula
\begin{multline*}
  \f[a_1|\ldots|a_k] = \sum_{\ell=0}^\infty \sum_{0<j_1<\cdots<j_\ell<k} (-1)^{\omega_{j_1}+\cdots+\omega_{j_\ell}} \\
  [\f_{(j_1)}[a_1|\ldots|a_{j_1}]|\f_{(j_2-j_1)}[a_{j_1+1}|\ldots|a_{j_2}]
  \ldots|\f_{(k-j_\ell)}[a_{j_\ell+1}|\ldots|a_k]] .
\end{multline*}

A quasi-isomorphism of \Ainf-algebras $\f:A_1\to A_2$ is a morphism of
\Ainf-algebras such that the linear component $\f_{(1)}:sA_1\to A_2$ (or
rather, the associated morphism from $A_1$ to $A_2$) is a
quasi-isomorphism.

Dg algebras are special cases of \Ainf-algebras, with
$m_{(1)}(a_1)=da_1$ and
\begin{equation*}
  m_{(2)}(a_1,a_2) = (-1)^{|a_1|} a_1a_2 , \quad a_1,a_2\in A .
\end{equation*}
The differential of the bar construction of a dg algebra is the
coderivation
\begin{equation*}
  \delta = \delta_1+\delta_2 = \delta\bigl(m_{(1)}\bigr)+\delta\bigl(m_{(2)}\bigr) ,
\end{equation*}
where
\begin{equation*}
  \delta_1[a_1|\ldots|a_k] = \sum_{j=1}^k (-1)^{\omega_{j-1}} \, [a_1|\ldots|da_j|\ldots|a_k]  
\end{equation*}
is the coderivation associated to the differential $d$ on the complex
$A$, and $\delta_2$ is the coderivation
\begin{equation*}
  \delta_2[a_1|\ldots|a_k] = \sum_{j=1}^{k-1} (-1)^{\omega_j+1} \, [a_1|\ldots|a_ja_{j+1}|\ldots|a_k]
\end{equation*}
associated to the product on $A$.

We learned the following result from Tsygan \cite{Tsygan}. The
associated dg bialgebra structure on $BG(A)$ was discovered by Getzler
and Jones \cite{GJ}. A dg (left) module in coalgebras $M$ for a dg
bialgebra $H$ is a (left) module for $H$ in the monoidal category of
dg coalgebras. In other words, there is an associative action
\begin{equation*}
  H \otimes M \to M
\end{equation*}
which is a morphism of dg coalgebras.
\begin{proposition}
  Let $A$ be an \Ainf-algebra. There is a unique dg bialgebra
  structure on $H=BG(A)$ and $H$-module in dg coalgebras structure on
  $M=BA$ such that the action of $[D]\in BG(A)$ on $BA$ is the graded
  coderivation $\delta(D)$ associated to $D\in G(A)$.
\end{proposition}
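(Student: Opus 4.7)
My plan is to exhibit the structure explicitly and then argue uniqueness. I would take the coproduct on $H=BG(A)$ to be deconcatenation and the product to be the shuffle product, and define the action of $[D_1|\ldots|D_n]\in B_nG(A)$ on $[a_1|\ldots|a_k]\in B_kA$ by simultaneous substitution of $D_1,\ldots,D_n$ (in order) into disjoint sub-bars:
\begin{multline*}
  [D_1|\ldots|D_n]\cdot[a_1|\ldots|a_k] \\
  = \sum \pm [a_1|\ldots|D_1[a_{i_1+1}|\ldots|a_{j_1}]|\ldots|D_n[a_{i_n+1}|\ldots|a_{j_n}]|\ldots|a_k],
\end{multline*}
summed over $0\le i_1<j_1\le i_2<j_2\le\cdots\le i_n<j_n\le k$ with Koszul signs extrapolated from the single-cochain formula. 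For $n=1$ this recovers the coderivation $\delta(D)$. The codifferential on $H$ is the coderivation associated to the Hochschild cochain $\ad m=[m,-]$ on the dg Lie algebra $G(A)$.

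Several compatibilities then need to be checked. The bialgebra axioms for shuffle product and deconcatenation on $BG(A)$ are classical, and that the codifferential is a derivation of shuffle follows from the Leibniz rule for the Gerstenhaber bracket. The property that the action is a morphism of dg coalgebras reduces, on primitives $[D]\in B_1G(A)$, to the fact that $\delta(D)$ is a coderivation of $BA$; for higher $B_nG(A)$ it is a combinatorial identity directly checkable from the substitution formula, since both $\Delta_H$ and $\Delta_M$ split bars and both sides of the morphism equation reorganize into the same signed sum over simultaneous substitutions into pairs of sub-bars. The hard part will be proving that the action is \emph{associative} with respect to shuffle: the Gerstenhaber circle product is only pre-Lie, not associative, so this cannot be reduced to a single identity among cochains; instead, it amounts to a brace-algebra identity matching two signed sums over planar insertion configurations, which is the essence of the result attributed to Getzler--Jones \cite{GJ}.

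For uniqueness, I would note that any dg bialgebra structure on $BG(A)$ compatible with the deconcatenation coproduct is determined by its values on generators, and its primitives are exactly $B_1G(A)=sG(A)$; combined with the prescription that $[D]\in B_1G(A)$ acts as $\delta(D)$, the module axiom forces the action of an arbitrary element of $H$. The remaining freedom in the choice of product is eliminated by the requirement that the action be a morphism of coalgebras: since $BA$ is cofreely cogenerated by $sA$, a coalgebra endomorphism is determined by its projection to $sA$, and equating the projection of the two iterated actions $(h_1h_2)\cdot m$ and $h_1\cdot(h_2\cdot m)$ forces the product of two primitives to be the shuffle, pinning down the product on all of $H$.
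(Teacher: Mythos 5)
There is a genuine error in your choice of structure. The product on $H=BG(A)$ is \emph{not} the shuffle product, and the codifferential is \emph{not} the coderivation associated to $[m,-]$. With those choices the module axioms fail. To see the problem with the product, run your own uniqueness argument to the end: since the action $H\otimes M\to M$ is a coalgebra morphism, it is determined by its corestriction to $sA$, and the corestriction of $[D]\cdot([E]\cdot[a_1|\ldots|a_k])=\delta(D)\delta(E)[a_1|\ldots|a_k]$ to $B_1A$ is $(D\circ E)[a_1|\ldots|a_k]$ (the terms where $D$ swallows the output of $E$), which is nonzero in general. The disjoint-substitution action of any element of $B_nG(A)$ with $n\ge2$ has vanishing corestriction to $B_1A$, so associativity forces the product $[D]\cdot[E]$ to have $B_1G(A)$-component $[D\circ E]$ in addition to the shuffle term $[D|E]\pm[E|D]$. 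The correct product is composition of operators under the injection $BG(A)\hookrightarrow\End(BA)$ given by the substitution action; its components are the brace operations $m_{1,\ell}([D]\otimes[E_1|\ldots|E_\ell])=D\{E_1,\ldots,E_\ell\}$, and this is exactly the noncommutative Getzler--Jones structure. Your remark that associativity "amounts to a brace-algebra identity" is the right instinct, but the identity you would need to verify for the \emph{shuffle} product is false: the shuffle of $[D]$ and $[E]$ acts by the disjoint terms of $\delta(D)\delta(E)$ only, omitting the nested ones. Similarly, the differential on $BG(A)$ must be $\ad(\delta)$ transported through the injection into $\End(BA)$; commuting $\delta(m)$ past the disjoint-substitution operator of $[D_1|\ldots|D_n]$ produces, besides the termwise $[m,D_i]$ contributions, the terms in which $m$ swallows several of the $D_i$ as nested arguments, giving the components $m\{D_1,\ldots,D_n\}$ of the codifferential. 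With only the coderivation of $[m,-]$, the action would not be a chain map.

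The parts of your proposal that do match the paper are the derivation of the substitution formula for the action of $[D_1|\ldots|D_n]$ from compatibility with the coproducts (which is the paper's uniqueness argument for the action) and the general strategy of pinning down the product by cofreeness of $BA$. The paper then finishes by observing that the image of $BG(A)$ in $\End(BA)$ is closed under composition and under $\ad(\delta)$, which \emph{defines} the product and differential rather than positing them; if you adopt that definition, the associativity and chain-map conditions you were worried about become automatic, and what remains to check is the closure of the subspace, i.e., that composites and commutators of substitution operators are again substitution operators.
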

\begin{proof}
  The compatibility of the action of $BG(A)$ on $BA$ with the
  coproducts determines the formula for the action of the element
  $[D_1|\ldots|D_n]\in B_nG(A)$ on $[a_1|\ldots|a_k]\in B_kA$:
  \begin{multline*}
    [D_1|\ldots|D_n] \bullet [a_1|\ldots|a_k] = \sum_{0\le i_1 < j_1\le\cdots\le i_n<j_n\le k}
    (-1)^{\sum_{\ell=1}^n (|D_\ell|-1)\omega_{i_\ell}} \\
    [a_1|\ldots|a_{i_1}|D_1[a_{i_1+1}|\ldots|a_{j_1}]|\ldots|a_{i_n}|
    D_n[a_{i_n+1}|\ldots|a_{j_n}]|\ldots|a_k] .
  \end{multline*}
  This gives an injection of graded vector spaces
  \begin{equation*}
    BG(A) \hookrightarrow \End(BA) .
  \end{equation*}
  It may be checked that this subspace is closed under the
  differential $\ad(\delta)$ on $\End(BA)$, giving rise to a codifferential
  on $BG(A)$, and in particular, an \Ainf-algebra structure on
  $G(A)$. This subspace is also closed under composition, giving rise
  to the product on $BG(A)$, and making it into a dg bialgebra.
\end{proof}

The differential on $BG(A)$ induces an \Ainf-algebra structure on
$G(A)$, given by the element
\begin{equation*}
  M[D_1|\ldots|D_n] =
  \begin{cases}
    [m,D_1] , & n=1 , \\
    m\{D_1,\ldots,D_n\} , & n>1 ,
  \end{cases}
\end{equation*}
of $G(G(A))$. In the special case that $A$ is a dg algebra, $G(A)$ is
a dg algebra, with Gerstenhaber's cup product
\begin{equation*}
  D_1 \cup D_2 = (-1)^{|D_1|} \, m\{D_1,D_2\} .
\end{equation*}

The product on $BG(A)$ is determined by its components $m_{k,\ell}$,
which are the compositions of the product
\begin{equation*}
  B_kG(A) \otimes B_\ell G(A) \to BG(A)
\end{equation*}
with the universal twisting cochain $BG(A)\to A$. The map
\begin{equation*}
  m_{k,\ell}: (sG(A))^{\otimes k}\otimes(sG(A))^{\otimes\ell} \to G(A)
\end{equation*}
vanishes unless $k=1$, and the operations $m_{1,\ell}$ are the brace
operations
\begin{equation*}
  m_{1,\ell}\bigl( [D] \otimes [E_1|\ldots|E_\ell] \bigr) = D\{E_1,\ldots,E_\ell\}
\end{equation*}
introduced in \cite{GM}, given by the formula
\begin{multline*}
  D\{E_1,\ldots,E_\ell\} [a_1|\ldots|a_k] = \sum_{0=j_1\le k_1\le\cdots\le j_\ell\le k_\ell\le k}
  (-1)^{\sum_{i=1}^\ell (|E_i|-1)\omega_{j_i}} \\
  D[a_1|\ldots|E_1[a_{j_1+1}|\ldots|a_{k_1}]| \ldots|E_\ell[a_{j_\ell+1}|\ldots|a_{k_\ell}]|\ldots|a_k]
  .
\end{multline*}
The operation $m_{1,1}$ is Gerstenhaber's operation $D\circ E$.

\section{Contractions}

A \textbf{weak contraction} is a pair of complexes $(X,\delta)$ and
$(Y,\partial)$, together with morphisms of complexes $f:X\to Y$ and
$g:Y\to X$ and a homotopy $h:X\to s^{-1}X$ of degree $-1$ such that
$fg=1_Y$ and $gf=1_X-[\delta,h]=1-(\delta h+h\delta)$:
\begin{equation*}
  \begin{tikzcd}
    h \,
    \rotatebox[origin=c]{270}{$\circlearrowright$} \, X
    \arrow[shift left=0.25em]{r}{f} &
    Y \arrow[shift left=0.25em]{l}{g}
  \end{tikzcd}
\end{equation*}
Hence $p=gf$ and $1-p=\delta h+h\delta$ are idempotents.

If the weak contraction $(X,Y,f,g,h)$ satisfies the additional
equations
\begin{align*}
  fh &= 0 , & hg &= 0 , & h^2=0 ,
\end{align*}
we call it a \textbf{contraction}. (These are also referred to as the
side conditions of homological perturbation theory.) If we replace the
homotopy $h$ of a weak contraction by
\begin{equation*}
  \hat{h} = (1-p)h\delta h\delta h(1-p) = \tilde{h} \delta \tilde{h} ,
\end{equation*}
where $\tilde{h}=(1-p)h(1-p)$, we obtain a contraction
$(X,Y,\hat{h},f,g)$.

Let $\Cone(f)=X\oplus sY$ be the mapping cone of $f:X\to Y$, and let
$u$ be a formal commuting variable of degree $2$. We may assemble the
data of a contraction into a curved Maurer--Cartan element $\CA$ on
$\Cone(f)[u]$, with curved differential $\CD+\CA$, where
\begin{align*}
  \CD &= \begin{bmatrix}
    \delta & 0 \\
    0 & -\partial
  \end{bmatrix} \intertext{and}
  \CA &=
  \begin{bmatrix}
    uh & ug \\
    f & 0
  \end{bmatrix} ,
\end{align*}
and curvature $(\CD+\CA)^2=u$.

A \text{perturbation} of a contraction is a Maurer--Cartan element
$\mu\in \Hom(X,X)$ of degree $1$ such that $1+\mu h$ is
invertible. Equivalently, we may assume that $1+h\mu$ is invertible, since
\begin{equation*}
  (1+h\mu)^{-1} = 1 - h(1+\mu h)^{-1}\mu
\end{equation*}
and
\begin{equation*}
  (1+\mu h)^{-1} = 1 - \mu(1+h\mu)^{-1}h ,
\end{equation*}
or that $1+h\mu+\mu h$ is invertible, since
\begin{equation*}
  (1+h\mu+\mu h)^{-1} = (1+h\mu)^{-1}(1+\mu h)^{-1}
\end{equation*}
and
\begin{equation*}
  (1+\mu h)^{-1} = (1+h\mu) (1+h\mu+\mu h)^{-1} .
\end{equation*}

A perturbation gives rise to a new contraction
$(X_*,Y_*,h_*,f_*,g_*)$. Here, $X_*$ has the same underlying graded
vector space as $V$, and differential $\delta_*=\delta + \mu$, and
$W_*$ has the same underlying graded vector space as $W$, with
differential
\begin{equation*}
  \partial_* = \partial + f(1+\mu h)^{-1}\mu g .
\end{equation*}
The remaining data are given by the formulas
\begin{gather*}
  f_* = f(1+\mu h)^{-1} , \quad g_* = (1+h\mu)^{-1}g , \\
  h_* = h(1+\mu h)^{-1} = (1+h\mu)^{-1}h .
\end{gather*}
In terms of the representation of a contraction as a curved
differential $\CD+\CA$ on the mapping cone $C(f)$, the deformed curved
differential equals $\CD + M_*+\CA_*$, where
\begin{equation*}
  M_* = \begin{bmatrix}
    \mu & 0 \\
    0 & 0
  \end{bmatrix} \text{ and }
  \CA_* =
  \begin{bmatrix}
    uh_* & ug_* \\
    f_* & 0
  \end{bmatrix}
  = u\CA(u + M_* \CA)^{-1} .
\end{equation*}
For more on contractions and their perturbations, see \cite{GL}. (What
that paper refers to as strong deformation retract data are what we
call weak contractions.)

Suppose that $A$ is an \Ainf-algebra with differential $\delta$.  Let
$\delta_\mu=\delta+\delta(\mu)$ be the associated codifferential on
$BA$, where $\mu\in G(A)$. Kadeishvili \cite{Kadeishvili} showed that if
$(A,Z,f,g,h)$ is a contraction, there is a natural \Ainf-algebra
structure on $Z$, corresponding to a codifferential $D_*$ on $Z$, and
\Ainf{} quasi-isomorphism $\g_*:BZ\to BA$ whose linearization is
$g$. This \Ainf-algebra structure and \Ainf-morphism are constructed
by solving a fixed-point equation.

Gugenheim, Lambe and Stasheff \cite{GLS} introduced a different
approach to homological perturbation theory for \Ainf-algebras, which
they named the tensor trick: they consider homological perturbation
theory for the bar construction $BA$. In this way, they also obtain a
left inverse $\f_*:BA\to BZ$ to the quasi-isomorphism $\g_*:BZ\to BA$,
with linearization $f$. (See also Huebschmann and Kadeishvili
\cite{HK}.) We now review their results. Note that \cite{Kadeishvili}
and \cite{GLS} restrict their attention to the case $\mu_{(1)}=0$: we
state our results in the more general setting in which
$1+\mu_{(1)}h:A\to A$ is invertible. In other words, we allow the
differential underlying the \Ainf-algebra structure on $A$ to differ
by a Maurer--Cartan element from the original differential on $A$.

The \Ainf-algebra structure on $Z$ and \Ainf-morphism from $Z$ to $A$
constructed by the tensor trick agree with those constructed by
Kadeishvili: this may be proved by showing that they solve the same
fixed-point equations. We will not have need of this identification in
this paper.

The tensor trick is as follows. The contraction $(A,Z,f,g,h)$ induces
a contraction $(BA,BZ,\f,\g,\h)$ of the bar construction $BA$
associated to the underlying complex of $A$ with the \Ainf-algebra
structure with all higher brackets set equal to zero. The homotopy
$\h$ is given by the formula
\begin{equation*}
  \h[a_1|\ldots|a_k] = \sum_{j=1}^k (-1)^{\omega_{j-1}}
  [pa_1|\ldots|pa_{j-1}|ha_j|a_{j+1}|\ldots|a_k] ,
\end{equation*}
and the morphisms $\f:BA\to BZ$ and $\g:BZ\to BA$ are given by the
formulas
\begin{equation*}
  \f[a_1|\ldots|a_k] = [fa_1|\ldots|fa_k]
\end{equation*}
and
\begin{equation*}
  \g[a_1|\ldots|a_k] = [ga_1|\ldots|ga_k] .
\end{equation*}
Thus $\f$ and $\g$ are morphisms of coalgebras:
\begin{equation*}
  \Delta\f = ( \f \otimes \f ) \Delta
\end{equation*}
and
\begin{equation*}
  \Delta\g = ( \g \otimes \g ) \Delta .
\end{equation*}
The idempotent
\begin{equation*}
  \p[a_1|\ldots|a_k] = \g\f[a_1|\ldots|a_k] = [pa_1|\ldots|pa_k]
\end{equation*}
is also a morphism of coalgebras. However, the homotopy operator $\h$
is not a coderivation: rather, it satisfies the formula
\begin{equation*}
  \Delta\h = ( \h \otimes 1 + \p \otimes \h ) \Delta .
\end{equation*}
\begin{lemma}
  If $1+\delta(\mu_{(1)})h:A\to A$ is invertible, then
  $1+\delta(\mu)\h:BA\to BA$ is invertible.
\end{lemma}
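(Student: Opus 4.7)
The plan is to use the filtration on $BA$ by bar length, $F_n BA = \bigoplus_{k \le n} B_k A$. Split the perturbation as $\mu = \mu_{(1)} + \mu_{(\ge 2)}$ where $\mu_{(\ge 2)} = \sum_{\ell \ge 2} \mu_{(\ell)}$. Each coderivation $\delta(\mu_{(\ell)})$ with $\ell \ge 2$ strictly lowers bar length (mapping $B_k A$ into $B_{k-\ell+1}A$), while $\h$ preserves bar length, so $\delta(\mu_{(\ge 2)})\h$ is pointwise nilpotent on $BA$: iterated applications starting from $B_n A$ terminate after at most $n$ steps. Consequently $1 + C \delta(\mu_{(\ge 2)})\h$ is invertible on $BA$ for any bar-length-preserving operator $C$, via a Neumann series that terminates on each $B_n A$.

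Applying this to the factorisation
\begin{equation*}
  1+\delta(\mu)\h \;=\; \bigl(1+\delta(\mu_{(1)})\h\bigr)\bigl(1+(1+\delta(\mu_{(1)})\h)^{-1}\delta(\mu_{(\ge 2)})\h\bigr)
\end{equation*}
reduces the lemma to showing that $1+\delta(\mu_{(1)})\h$ is invertible on each $B_n A$.

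For this reduced problem, write $T := \mu_{(1)}$ and use the idempotent decomposition $A = pA \oplus qA$ with $q := 1-p$. The side conditions $h^2 = 0$, $hp = 0$, $ph = 0$ imply that $h$ has only a $qA \to qA$ block $h_{qq}$ with $h_{qq}^2 = 0$, and the hypothesis $1 + Th$ invertible on $A$ is equivalent to the invertibility of $1 + T_{qq}h_{qq}$ on $qA$. Decompose $B_n A = \bigoplus_{S \subset \{1,\ldots,n\}}(B_n A)_S$, where $(B_n A)_S$ consists of tensors whose $i$-th factor lies in $pA$ if $i \in S$ and in $qA$ otherwise, and let $\phi(S)$ be the smallest index not in $S$. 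A direct check using $p^2 = p$ and $hp = ph = 0$ shows that $\h^{(j)}$ is nonzero on $(B_n A)_S$ exactly when $j = \phi(S)$, and then acts as $h_{qq}^{(j)}$ at position $j$ only. On each diagonal block $(B_n A)_S \to (B_n A)_S$, using $h_{qq}^2 = 0$ together with the identity $(1 + T_{qq}h_{qq})^{-1}h_{qq} = h_{qq}$, the operator $1 + \delta(T)\h$ factors as
\begin{equation*}
  \bigl(1 + (T_{qq}h_{qq})^{(\phi(S))}\bigr)\bigl(1 + h_{qq}^{(\phi(S))}\textstyle\sum_{i \ne \phi(S)} T_{\mathrm{diag},S}^{(i)}\bigr),
\end{equation*}
where $T_{\mathrm{diag},S}^{(i)}$ is $T_{pp}^{(i)}$ for $i \in S$ and $T_{qq}^{(i)}$ for $i \notin S$; both factors are invertible, the first by the hypothesis applied at position $\phi(S)$ and the second because its square vanishes.

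The main obstacle is to upgrade these diagonal-block inversions to an inverse of $1 + \delta(T)\h$ on all of $B_n A$, accounting for the off-diagonal transitions from the $T_{pq}^{(i)}$ and $T_{qp}^{(i)}$ blocks, which move $(B_n A)_S$ into $(B_n A)_{S \triangle \{i\}}$ paired with $h_{qq}^{(\phi(S))}$. The plan is to show that the off-diagonal part of $\delta(T)\h$ is pointwise nilpotent on $B_n A$: each off-diagonal transition multiplies by $h_{qq}$ at the position $\phi$ of the current summand, and any path through the summand lattice must eventually revisit a position in a configuration where $h_{qq}^2 = 0$ forces the contribution to vanish. Combined with the invertibility of the diagonal blocks, this yields $(1 + \delta(T)\h)^{-1}$ on each $B_n A$ and, by the first paragraph, completes the proof.
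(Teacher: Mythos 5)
Your first reduction---splitting off $\mu_{(\ge 2)}$, whose coderivation strictly lowers bar length so that $C\,\delta(\mu_{(\ge2)})\h$ is locally nilpotent---is exactly the paper's first step, and your translation of the hypothesis ($1+Th$ invertible on $A$ iff $1+T_{qq}h_{qq}$ invertible on $qA$) and your diagonal-block factorisation are both correct. The genuine gap is in your last paragraph, at the step you yourself flag as the main obstacle. Write $1+\delta(T)\h=D+N$ with $D$ the block-diagonal and $N$ the block-off-diagonal part relative to $B_nA=\bigoplus_S(B_nA)_S$. Invertibility of $D$ together with nilpotence of $N$ does \emph{not} imply invertibility of $D+N$: since $N$ moves both up and down in the lattice of subsets $S$, there is no triangular ordering to appeal to, and what you actually need is nilpotence of $ND^{-1}$ (equivalently $D^{-1}N$). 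That operator interleaves your off-diagonal transitions with the factors $(1+(T_{qq}h_{qq})^{(\phi(S))})^{-1}$ and $1-h_{qq}^{(\phi(S))}\sum_{i\ne\phi(S)}T^{(i)}_{\mathrm{diag}}$ coming from $D^{-1}$, which insert arbitrarily many further $h$'s and diagonal $T$'s at varying positions; your path-counting heuristic (``each transition multiplies by $h_{qq}$ at the current $\phi$, so some position must eventually see $h_{qq}^2$'') is formulated only for $N$ itself and does not survive this interleaving as written. Nor is the nilpotence of $N$ itself actually proved: ``must eventually revisit a position in a configuration where $h_{qq}^2=0$ forces vanishing'' is the right intuition but is asserted, not argued.

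The gap is closable, but it needs two observations you have not made. First, on $(B_nA)_S$ one has $N\cdot\bigl(1-h_{qq}^{(\phi(S))}\sum_{i\ne\phi(S)}T^{(i)}_{\mathrm{diag}}\bigr)=N$, because the nontrivial term ends with $h$ at position $\phi(S)$ while $N$ begins with $h$ at that same position; hence $ND^{-1}=N\bigl(1+(T_{qq}h_{qq})^{(\phi(S))}\bigr)^{-1}$, and each factor of $ND^{-1}$ deposits at position $\phi(S)$ a block $h(T_{qq}h)^{j}$ beginning and ending with $h$, together with exactly one off-diagonal $T$ at some position. Second, one needs a quantitative count: between two consecutive such blocks at a fixed position, the off-diagonal $T$'s applied there must number at least one (to avoid $h_{qq}^2$) and be even in number (to return that factor to $qA$), hence at least two; writing $a_i$ for the number of blocks and $c_i$ for the number of off-diagonal $T$'s at position $i$, this gives $m=\sum_i c_i\ge 2\sum_i(a_i-1)=2(m-n)$, so $(ND^{-1})^{m}=0$ for $m>2n$. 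With these two points supplied your argument becomes a complete, genuinely different proof from the paper's, which instead decomposes $\delta(\mu_{(1)})\h=\sum_{i,j}\alpha_{ij}$ by the tensor positions of $\mu_{(1)}$ and $h$ and exploits an honest triangularity in $(i,j)$, so that no analogue of the $ND^{-1}$ issue ever arises there.
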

\begin{proof}
  On $B_kA$, we have
  \begin{multline*}
    (1+\delta(\mu)\h)^{-1} = \sum_{\ell=0}^\infty \sum_{j_1+\cdots+j_\ell<k} (-1)^\ell
    (1+\delta(\mu_{(1)})\h)^{-1} \delta(\mu_{(j_1+1)})\h \\
    \ldots \bigl( 1+\delta( \mu_{(1)} )\h \bigr)^{-1} \delta(\mu_{(j_\ell+1)})\h
    \bigl( 1+\delta( \mu_{(1)} )\h \bigr)^{-1} .
  \end{multline*}
  Thus, it suffices to prove that
  $1+\delta( \mu_{(1)} )\h : B_kA\to B_kA$ is invertible.

  On $B_kA$, we have
  \begin{equation*}
    \delta( \mu_{(1)} )\h = \sum_{i,j=1}^k \alpha_{ij} ,
  \end{equation*}
  where
  \begin{align*}
    \alpha_{ij} &= \bigl( 1^{\otimes i-1} \otimes \delta( \mu_{(1)} ) \otimes 1^{\otimes k-i} \bigr)
             \bigl( p^{\otimes j-1} \otimes h \otimes 1^{\otimes k-j} \bigr) \\
    &=
    \begin{cases}
      p^{\otimes i-1} \otimes \delta( \mu_{(1)} ) p \otimes p^{\otimes j-i-1} \otimes h \otimes 1^{\otimes k-j} , & i<j , \\
      p^{\otimes j-1} \otimes \delta( \mu_{(1)} ) h \otimes 1^{\otimes k-j} , & i=j , \\
      - p^{\otimes j-1} \otimes h \otimes 1^{\otimes i-j-1} \otimes \delta( \mu_{(1)} ) \otimes 1^{\otimes k-i} , & i>j .
    \end{cases}
  \end{align*}
  Since $\alpha_{pq}\alpha_{ij}=0$ unless $i<j$ or $q\ge i=j$, and
  $\alpha_{pq}\ldots\alpha_{ij}=0$ if $i<j\le q$, we see that
  \begin{align*}
    ( 1 + \delta( \mu_{(1)} ) \h )^{-1}
    &= \Bigl( 1 - \sum_{i>j} \alpha_{ij} \Bigr) \Bigl( 1 + \sum_{i\le j} \alpha_{ij}
      \Bigr)^{-1} \\
    &= \Bigl( 1 - \sum_{i>j} \alpha_{ij} \Bigr) \sum_{\ell=0}^{k-1} (-1)^\ell
    \sum_{\substack{ j_1<\cdots<j_\ell \\ i_1<j_1,\ldots,i_\ell<j_\ell}} \beta \alpha_{i_1j_1} \beta \ldots \beta
    \alpha_{i_\ell j_\ell} \beta
  \end{align*}
  where
  $\beta = \bigl( 1 + \alpha_{kk} \bigr)^{-1} \ldots \bigl( 1 + \alpha_{11} \bigr)^{-1}$.
  But $1+\alpha_{jj}:B_kA\to B_kA$ is invertible:
  \begin{equation*}
    \bigl( 1+\alpha_{jj} \bigr)^{-1} = 1 -
    p^{\otimes j-1} \otimes \delta( \mu_{(1)} )h\bigl( 1 + \delta( \mu_{(1)} )h \bigr)^{-1} \otimes
    1^{\otimes k-j} .
  \end{equation*}
  Hence $\beta$ is invertible, completing the proof.
\end{proof}

By this lemma, if $1+\delta(\mu_{(1)})h:A\to A$ is invertible,
$\delta(\mu)$ induces a deformed contraction
\begin{gather*}
  \f_* = \f(1+\delta(\mu)\h)^{-1} : BA \to BZ , \quad \g_* = (1+\h \delta(\mu))^{-1}\g :
  BZ \to BA , \\
  \h_* = \h(1+\delta(\mu)\h)^{-1} : BA \to BA .
\end{gather*}
The deformed differential $\partial_*$ on $BZ$ is given by the formula
\begin{equation*}
  \partial_* = \f\delta\g + \f \delta(\mu)(1+\h \delta(\mu))^{-1} \g ,
\end{equation*}
and the deformed idempotent $\p_*$ on $BA$ by the formula
\begin{equation*}
  \p_* = (1+\h \delta(\mu))^{-1}\p(1+\delta(\mu)\h)^{-1} .
\end{equation*}

\begin{proposition}
  The maps $\f_*$, $\g_*$ and $\p_*=\g_*\f_*$ are morphisms of
  coalgebras, and $\partial_*$ is a coderivation.
\end{proposition}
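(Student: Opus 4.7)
The plan is to verify each claim by expanding in geometric series and combining four ingredients: the side conditions $\f\h = 0$, $\h\g = 0$, $\h^2 = 0$ of the tensor trick; the coalgebra morphism properties of $\f$, $\g$, and $\p = \g\f$; the coderivation property of $\delta(\mu)$; and the twisted Leibniz rule $\Delta\h = (\h\otimes 1 + \p\otimes\h)\Delta$ stated in the excerpt.

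For $\f_*$, set $N = \delta(\mu)\h$, so $\f_* = \sum_{n\geq 0}(-1)^n\f N^n$. Combining the coderivation rule for $\delta(\mu)$ with the twisted Leibniz rule for $\h$ gives
\[
\Delta N = (N\otimes 1 + \delta(\mu)\p\otimes\h + \h\otimes\delta(\mu) + \p\otimes N)\Delta.
\]
I would then prove by induction on $n$ that $\Delta(\f N^n) = \sum_{i+j=n}(\f N^i\otimes\f N^j)\Delta$. In the induction step the two ``error'' terms $\delta(\mu)\p\otimes\h$ and $\h\otimes\delta(\mu)$ in $\Delta N$ are annihilated after pairing with $\f N^i\otimes\f N^j$ because $\f N^k\h = 0$ for every $k\geq 0$ (using $\f\h = 0$ when $k = 0$ and $\h^2 = 0$ when $k\geq 1$), which kills a factor in each. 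The $\p\otimes N$ term produces $\f N^i\p\otimes\f N^{j+1}$, which vanishes for $i\geq 1$ since $\h\p = \h\g\f = 0$, and for $i = 0$ yields $\f\otimes\f N^{j+1}$ via $\f\p = \f$, precisely supplying the new $i = 0$ summand at the next inductive stage. Resumming with alternating signs gives $\Delta\f_* = (\f_*\otimes\f_*)\Delta$. The argument for $\g_* = \sum_{n\geq 0}(-1)^n(\h\delta(\mu))^n\g$ is entirely dual, using $\h\g = 0$, $\f\h = 0$, and $\p\g = \g$; then $\p_* = \g_*\f_*$ is a coalgebra morphism by composition.

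For $\partial_*$, I would rewrite it as $\partial_* = \f\delta_*\g_*$, where $\delta_* = \delta + \delta(\mu)$ is a coderivation of $BA$. This follows from the formula $\partial_* = \f\delta\g + \f\delta(\mu)\g_*$ in the excerpt together with $\f\g_* = 1_{BZ}$, which is immediate from $\f\h = 0$ since every term of positive degree in the expansion of $\f(1+\h\delta(\mu))^{-1}\g$ begins with $\f\h$. Then, using that $\f$ and $\g_*$ are coalgebra morphisms and $\delta_*$ is a coderivation,
\[
\Delta\partial_* = (\f\otimes\f)\Delta\delta_*\g_* = (\partial_*\otimes\f\g_* + \f\g_*\otimes\partial_*)\Delta = (\partial_*\otimes 1 + 1\otimes\partial_*)\Delta,
\]
completing the proof.

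The main obstacle is the bookkeeping in the induction for $\f_*$ and $\g_*$: one must separate the boundary index $i = 0$ (or $j = 0$) from the bulk, because the side conditions give $\f\p = \f$ (nonzero) whereas $\f N^i\p = 0$ for $i\geq 1$, and only this careful case split lets the $\p\otimes N$ summand slot into the correct place in the recursion. Once this is organized, the rest is a mechanical resummation of geometric series.
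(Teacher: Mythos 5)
Your argument captures the right mechanism---the twisted Leibniz rule $\Delta\h=(\h\otimes1+\p\otimes\h)\Delta$ combined with the side conditions is exactly what drives the paper's proof---but it has a genuine gap in the generality in which the proposition is stated. You expand $(1+\delta(\mu)\h)^{-1}$ as the Neumann series $\sum_n(-1)^n N^n$ with $N=\delta(\mu)\h$, and likewise for $\g_*$ and for $\f\g_*=1$. That series only makes sense when $N$ is locally nilpotent, which holds when $\mu_{(1)}=0$ (then $N$ strictly raises the tensor-length filtration on $B_kA$) but not in the setting of this section, where $\mu_{(1)}$ may be nonzero and one only assumes $1+\delta(\mu_{(1)})h$ invertible; that is precisely why the preceding lemma gives a non-geometric formula for the inverse. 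The paper avoids the series entirely: using $\h\h_*=0$, $\h\p_*=0$, $\p\h_*=0$ and $\p\p_*=\p(1+\delta(\mu)\h)^{-1}$ it establishes the closed-form identity
\begin{equation*}
  (\h\otimes1+\p\otimes\h)(\delta(\mu)\otimes1+1\otimes\delta(\mu))(\h_*\otimes1+\p_*\otimes\h_*)
  =(\h\otimes1+\p\otimes\h)-(\h_*\otimes1+\p_*\otimes\h_*),
\end{equation*}
reads off from it the inverse of $1\otimes1+(\h\otimes1+\p\otimes\h)(\delta(\mu)\otimes1+1\otimes\delta(\mu))$, and deduces $\Delta\g_*=(\g_*\otimes\g_*)\Delta$ and $\Delta\f_*=(\f_*\otimes\f_*)\Delta$. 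Your induction is the term-by-term shadow of this identity and is perfectly sound whenever the series converges (which covers the applications in \S5--\S6, where $\mu$ lowers a filtration degree), but as a proof of the proposition as stated you would need to recast the resummation in this closed operator form.

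Two smaller points. First, $\partial_*=\f\delta_*\g_*$ does not follow from $\f\g_*=1$ alone: you need $\f\delta\g_*=\f\delta\g$, i.e.\ $\f\delta\h=0$. This is true---from $\delta\h+\h\delta=1-\p$ one gets $\f\delta\h=\f-\f\p-\f\h\delta=0$---but it must be said; the paper instead keeps $\f\delta\g$ and $\f\delta(\mu)\g_*$ separate and shows the latter is a coderivation directly from $\Delta\g_*=(\g_*\otimes\g_*)\Delta$ and $\f\g_*=1$. Second, with the Koszul convention the cross term in your formula for $\Delta N$ is $-\h\otimes\delta(\mu)$ rather than $+\h\otimes\delta(\mu)$ (both operators are odd); this is harmless for your cancellation, since that term is annihilated by $\f N^i\h=0$ regardless of sign.
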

\begin{proof}
  Using the formulas $\h\h_*=0$, $\h\p_*=0$, $\p\h_*=0$ and
  $\p\p_*=\p(1+\delta(\mu)\h)^{-1}$, we see that
  \begin{multline*}
    ( \h\otimes1 + \p\otimes\h ) ( \delta(\mu)\otimes1 + 1\otimes\delta(\mu) ) ( \h_* \otimes 1 + \p_* \otimes \h_* ) \\
    \begin{aligned}
      &= \h\delta(\mu)\h_* \otimes 1 + \h\delta(\mu)\p_* \otimes \h_* + \p\delta(\mu)\h_* \otimes \h + \p\p_*
        \otimes \h\delta(\mu)\h_* \\
      &= ( \h - \h_* ) \otimes 1 + ( \p ( 1 + \delta(\mu)\h )^{-1} - \p_* ) \otimes \h_* \\
      & \quad + ( \p - \p ( 1 + \delta(\mu)\h )^{-1} ) \otimes \h + \p ( 1 + \delta(\mu)\h
        )^{-1} \otimes ( \h - \h_* ) \\
      &= ( \h \otimes 1 + \p \otimes \h ) - ( \h_* \otimes 1 + \p_* \otimes \h_* ) .
    \end{aligned}
  \end{multline*}
  It follows that
  \begin{multline*}
    \bigl( 1\otimes1 + ( \h\otimes1 + \p\otimes\h ) ( \delta(\mu)\otimes1 + 1\otimes\delta(\mu) ) \bigr)^{-1} \\
    = 1\otimes1 - ( \h_* \otimes 1 + \p_* \otimes \h_* )( \delta(\mu)\otimes1 + 1\otimes\delta(\mu) ) ,
  \end{multline*}
  proving that
  \begin{align}
    \label{hd}
    \Delta(1+\h\delta(\mu))^{-1}
    &= \bigl( 1\otimes1 + ( \h\otimes1 + \p\otimes\h ) ( \delta(\mu)\otimes1 + 1\otimes\delta(\mu) ) \bigr)^{-1} \Delta \\
    \notag
    &= \bigl( 1\otimes1 - ( \h_* \otimes 1 + \p_* \otimes \h_* )( \delta(\mu)\otimes1 + 1\otimes\delta(\mu) )
      \bigr) \Delta .
  \end{align}
  We see that
  \begin{align*}
    \Delta\g_* &= \Delta (1+\h\delta(\mu))^{-1}\g \\
    &= \bigl( 1\otimes1 - ( \h_* \otimes 1 + \p_* \otimes \h_* )( \delta(\mu)\otimes1 + 1\otimes\delta(\mu) )
      \bigr) ( \g\otimes\g ) \Delta \\
    &= ( \g_*\otimes\g_* ) \Delta .
  \end{align*}

  Since $\f\h=0$, we see that
  \begin{align*}
    \f\g_*
    &= \f(1+\h\delta)^{-1}\g \\
    &= \f \bigl( 1 - \h(1+\delta\h)^{-1}\delta \bigr)\g \\
    &= \f\g = 1 .
  \end{align*}
  It follows that $\partial_*=\f \delta\g+\f\delta(\mu)\g_*$ is a coderivation:
  certainly, $\f\delta\g$ is a coderivation, while
  \begin{align*}
    \Delta \f\delta(\mu)\g_* &= (\f\otimes\f)(\delta(\mu)\otimes1+1\otimes\delta(\mu))(\g_*\otimes\g_*) \Delta \\
              &= ( \f\delta(\mu)\g_*\otimes\f\g_* + \f\g_*\otimes\f\delta(\mu)\g_*) \Delta \\
              &= ( \f\delta(\mu)\g_*\otimes 1 + 1 \otimes\f\delta(\mu)\g_*) \Delta .
  \end{align*}

  The same calculation as for the proof of \eqref{hd} shows that
  \begin{equation*}
    \Delta(1+\delta(\mu)\h)^{-1} = \bigl( 1\otimes1 - ( \delta(\mu)\otimes1 + 1\otimes\delta(\mu) )( \h_* \otimes 1 +
    \p_* \otimes \h_* ) \bigr) \Delta .
  \end{equation*}

  It follows that
  \begin{align*}
    \Delta\f_* &= \Delta \f(1+\delta(\mu)\h)^{-1} \\
    &= ( \f\otimes\f ) \bigl( 1\otimes1 - ( \delta(\mu)\otimes1 + 1\otimes\delta(\mu) ) ( \h_* \otimes 1 + \p_* \otimes
      \h_* ) \bigr) \Delta \\
   &= \bigl( \f_* \otimes \f_* \bigr) \Delta .
   \qedhere
  \end{align*}
\end{proof}

\begin{corollary}
  Let $\delta_\mu=\delta+\delta(\mu)$, $\mu\in G(A)$, be a codifferential on
  $BA$ corresponding to an \Ainf-algebra structure on $A$, such that
  \begin{equation*}
    1+\delta\bigl( \mu_{(1)}\bigr) h : A\to A
  \end{equation*}
  is invertible. The codifferential $\partial_*$ on $BZ$ constructed by the
  tensor trick induces an \Ainf-algebra structure on $Z$, and
  $\f_*:BA\to BZ$ and $\g_*:BZ\to BA$ are quasi-isomorphisms of
  \Ainf-algebras.
\end{corollary}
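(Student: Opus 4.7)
My plan is to deduce the Corollary directly from the Proposition above, which already did all the coalgebra-theoretic work, together with classical homological perturbation theory (HPT) for complexes. The Proposition gives that $\f_*,\g_*$ are morphisms of coalgebras and that $\partial_*$ is a coderivation, so the only remaining content is the chain-theoretic part: $\partial_*^2=0$ and the linearizations of $\f_*,\g_*$ are quasi-isomorphisms of complexes.

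For the first point, I would apply HPT to the weak contraction $(BA,BZ,\f,\g,\h)$ with perturbation $\delta(\mu):BA\to BA$. The Maurer--Cartan condition $(\delta+\delta(\mu))^2=\delta_\mu^2=0$ is precisely the \Ainf{} relation $m\circ m=0$, and invertibility of $1+\delta(\mu)\h$ is the content of the preceding Lemma. HPT then produces a weak contraction of $(BA,\delta_\mu)$ onto $(BZ,\partial_*)$ whose structure maps coincide with the $\f_*,\g_*,\h_*$ written above; in particular $\partial_*^2=0$. Combined with the Proposition, $\partial_*$ is a codifferential on $BZ$, which by definition is an \Ainf-algebra structure on $Z$, and $\f_*,\g_*$ are morphisms of \Ainf-algebras.

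For the quasi-isomorphism claim, observe that $\delta(\mu_{(k)})$ lowers tensor degree by $k-1$, so on the summand $B_1A\cong sA$ only $\delta(\mu_{(1)})$ contributes. Hence $\delta(\mu)\h$ restricts on $B_1A$ to $m_{(1)}h$, and the linearization of $\f_*=\f(1+\delta(\mu)\h)^{-1}$ is $f(1+m_{(1)}h)^{-1}$; symmetrically $(\g_*)_{(1)}=(1+hm_{(1)})^{-1}g$. Applying the same restriction to $\partial_*=\f\delta\g+\f\delta(\mu)(1+\h\delta(\mu))^{-1}\g$ identifies $(\partial_*)_{(1)}$ with $d_Z+fm_{(1)}(1+hm_{(1)})^{-1}g$. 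But these are precisely the maps furnished by classical HPT applied to the original contraction $(A,Z,f,g,h)$ with perturbation $m_{(1)}$, whose Maurer--Cartan equation $[d,m_{(1)}]+m_{(1)}^2=0$ is the lowest \Ainf{} axiom and whose invertibility hypothesis is assumed. Classical HPT then gives that $(\f_*)_{(1)}$ and $(\g_*)_{(1)}$ are quasi-isomorphisms between the complexes $(A,d+m_{(1)})$ and $(Z,d_Z+(\partial_*)_{(1)})$, completing the proof.

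The only point requiring care is the identification in the last paragraph of the $B_1$-restriction of the perturbed bar-level HPT with the classical HPT applied at the linear level; but this reduces entirely to the tensor-degree observation that $\delta(\mu_{(k)})$ drops out of $B_1$ for $k\ge2$, so no substantive obstacle arises.
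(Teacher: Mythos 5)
Your argument is correct and is exactly the derivation the paper intends: the Corollary is stated there without proof, as an immediate consequence of the preceding Lemma (invertibility of $1+\delta(\mu)\h$) and Proposition (compatibility of $\f_*$, $\g_*$, $\partial_*$ with the coalgebra structure) combined with standard homological perturbation theory, which is precisely what you supply, including the tensor-degree observation identifying the linearizations. The only blemish is notational: in your final paragraph you write $m_{(1)}$ where you mean the perturbation component $\mu_{(1)}$ (the full structure map is $m_{(1)}=d+\mu_{(1)}$, and the hypothesis concerns $1+\delta(\mu_{(1)})h$), but the logic is unaffected.
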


\section{The bar construction of a symmetric algebra}

The bar construction $BA$ of a dg algebra $A$ is a graded commutative
bialgebra: the product on $BA$ is the shuffle product
\begin{multline*}
  [a_1|\ldots|a_k] \sh [a_{k+1}|\ldots|a_{k+\ell}] \\
  = \sum_{\sigma\in S(k,\ell)} (-1)^{\sum_{i=1}^k ( \omega_{k+\sigma(i)-i} - \omega_k ) (|a_i|-1)}
  [a_{\sigma^{-1}(1)}|\ldots|a_{\sigma^{-1}(k+\ell)}] ,
\end{multline*}
where $S(k,\ell)$ is the set of shuffles,
\begin{equation*}
  S(k,\ell) = \{ \sigma\in S_{k+\ell} \mid \text{$\sigma(i)<\sigma(j)$ if $1\le i<j\le k$ or
    $k+1\le i<j\le k+\ell$} \} .
\end{equation*}
The codifferential $\delta$ of $BA$ is a derivation with respect to the
shuffle product if and only if $A$ is graded commutative.

The calculation of the cohomology of $BSV$ is one of the fundamental
results of homological algebra. The subspace $sV\subset B_1SV$ generates a
dg commutative subalgebra $\Lambda V$ of the dg commutative bialgebra
$BSV$. Since elements of $sV$ are primitive in $BSV$, the inclusion
$g:\Lambda V\hookrightarrow BSV$ is a morphism of dg commutative bialgebras.

In this section, we construct a contracting homotopy
$h:B_kSV\to B_{k+1}SV$ which shows that $g$ is a quasi-isomorphism. Let
$\{x^\alpha\}$ be a homogeneous basis of $V$, let $x^\alpha\in G(SV)$ be the
associated Hochschild zero-cochain, and let
$\overline{\partial}{}_\alpha\in G(SV)$ be the Hochschild one-cochain
\begin{equation*}
  (-1)^{|x^\alpha|} \overline{\partial}{}_\alpha a = \partial_\alpha a - \epsilon(\partial_\alpha a) ,
\end{equation*}
where $\partial_\alpha a$ is the graded partial derivative of $a$ with respect to
$x^\alpha$, and $\epsilon(\partial_\alpha a)$ is the constant term of $\partial_\alpha a$.

Let $\rho\in G(SV)$ be the Hochschild one-cochain which acts by
multiplication by $k$ on $S^kV$. Let
\begin{equation*}
  \tau[a] = \sum_\alpha \epsilon(\partial_\alpha a) x^\alpha  \in G(A)
\end{equation*}
be the Hochschild one-cochain that projects from $SV$ to
$V=S^1V\subset SV$. We also denote the coderivations $\delta(\rho)$ and
$\delta(\tau):BSV\to BSV$ by $\rho$ and $\tau$. Since the product in
$SV$ is homogeneous of degree $0$, it is clear that $\rho$ commutes with
the differential $\delta$. We have
\begin{equation}
  \label{rhotau}
  \sum_\alpha x^\alpha \cup \overline{\partial}{}_\alpha = \rho - \tau \in G(SV) .
\end{equation}

The intersection of $\Wedge V$ with $B_kSV$ is the image of the
projection
\begin{equation}
  \label{pi}
  p_k[a_1|\ldots|a_k] = \frac{1}{k!} \, [\tau a_1] \sh \cdots \sh [\tau a_k]
\end{equation}
on $B_kSV$. Note that $p$ is a morphism of dg algebras, though not of
coalgebras. Denote the map $g^{-1}\circ p$ from $BSV$ to $\Wedge V$ by
$f$: thus $fg=1_{\Wedge V}$, and $gf=p$.

Define the operators $\xi:B_kSV\to B_{k+1}SV$ and
$\lambda:B_kSV\to B_kSV$ by the formulas
\begin{align*}
  \xi[a_1|\ldots|a_k]
  &= \sum_\alpha \bigl[x^\alpha\bigm|\overline{\partial}{}_\alpha\bigr] \bullet [a_1|\ldots|a_k] \\
  &= \sum_{0\le i<j\le k} \sum_\alpha (-1)^{\omega_i+|x^\alpha|(\omega_{j-1}-\omega_i+1)} \\
  & \qquad\qquad [a_1|\ldots|a_i|x^\alpha|\ldots|\partial_\alpha a_j - \epsilon(\partial_\alpha a_j)|\ldots|a_k] , \\
  \lambda[a_1|\ldots|a_k] &= [a_1|\ldots|a_{k-1}] \sh [\tau a_k] .
\end{align*}
By inspection, it is clear that the operators $\xi$ and $\lambda$ both commute
with $\rho$.

We learned the following result from \cite{dWL}.
\begin{proposition}
  \label{dWL}
  $[\delta,\xi] = \rho - \lambda$
\end{proposition}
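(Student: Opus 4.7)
The plan is to view $\xi$ structurally as the module action of an element of $B_2G(SV)$ and reduce $[\delta,\xi]$ to a computation of a differential in $BG(SV)$, which I then expand using the \Ainf-structure on $G(SV)$. Concretely, by comparison with the $\bullet$-action formula of the preceding proposition, $\xi$ equals $\sum_\alpha[x^\alpha|\overline{\partial}{}_\alpha]\bullet(-)$: it is the action on $BSV$ of the element $\sum_\alpha[x^\alpha|\overline{\partial}{}_\alpha]\in B_2G(SV)$. Because the embedding $BG(SV)\hookrightarrow\End(BSV)$ was constructed so that the differential on $BG(SV)$ is the restriction of $\ad(\delta)$, we have
\[
[\delta,\xi] \;=\; \Bigl(\delta_{BG(SV)}\sum_\alpha[x^\alpha|\overline{\partial}{}_\alpha]\Bigr) \bullet (-) ,
\]
so the task reduces to computing this two-bar differential in $BG(SV)$.

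Since $SV$ is a dg algebra, the \Ainf-structure $M$ on $G(SV)$ has only two nonzero components: $M_{(1)}(D)=[m,D]$ and $M_{(2)}(D_1,D_2)=m\{D_1,D_2\}$. The codifferential $\delta(M)$ applied to $[x^\alpha|\overline{\partial}{}_\alpha]$ therefore yields three contributions: $[[m,x^\alpha]\,|\,\overline{\partial}{}_\alpha]$, $\pm[x^\alpha\,|\,[m,\overline{\partial}{}_\alpha]]$, and $[m\{x^\alpha,\overline{\partial}{}_\alpha\}]$. The first vanishes since $SV$ is graded commutative and the zero-cochain $x^\alpha$ is central. The third equals $\pm[x^\alpha\cup\overline{\partial}{}_\alpha]$, and summing over $\alpha$ gives, by \eqref{rhotau}, the single-bar element $\pm[\rho-\tau]$; its $\bullet$-action on $BSV$ is the coderivation associated to the $1$-cochain $\rho-\tau$. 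The $\rho$ part acts on $[a_1|\ldots|a_k]$ by multiplication by the sum of the symmetric degrees of the $a_j$, which is exactly the operator $\rho$ of the statement.

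The second term carries the residual work. Because $\partial_\alpha$ is an honest derivation of $SV$, the Hochschild coboundary $[m,\overline{\partial}{}_\alpha]$ is concentrated in the constant-term correction $\overline{\partial}{}_\alpha=\pm(\partial_\alpha-\epsilon\circ\partial_\alpha)$: its value on a pair $(a,b)$ involves $\epsilon(\partial_\alpha a)\,b$ and $\epsilon(\partial_\alpha b)\,a$. When this 2-cochain appears in position two of the $\bullet$-action, preceded by $x^\alpha$ in position one, the identity $\sum_\alpha\epsilon(\partial_\alpha a_j)\,x^\alpha=\tau a_j$ absorbs the $x^\alpha$ into a $\tau a_j$ factor that is \emph{inserted} among the remaining bar entries rather than substituted in place. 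After summing over the insertion positions of $x^\alpha$, these contributions convert the coderivation $-\delta(\tau)$ arising from the $-\tau$ part of term three into the shuffle operator $-\lambda$: instead of replacing each $a_j$ by $\tau a_j$ in place, $\tau$ migrates to the end and is shuffled through the earlier entries, which is the definition of $\lambda$.

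The main obstacle is the meticulous sign bookkeeping and the combinatorial matching needed to show that the second and third terms fit together precisely as $-\lambda$ rather than as some other combination of $\delta(\tau)$ and shuffle insertions. Should the structural route prove unwieldy, a direct alternative is to expand $\delta\xi[a_1|\ldots|a_k]$ and $\xi\delta[a_1|\ldots|a_k]$, partition the resulting terms according to whether $\delta$ contracts inside the original bar, across the inserted pair $[x^\alpha|\overline{\partial}{}_\alpha a_j]$, or between the pair and a neighbor, and collapse the sums using the Leibniz rule for $\partial_\alpha$ together with \eqref{rhotau}. The special case $j=i+1$, where $\delta$ contracts the inserted pair directly, is the source of the $\rho-\tau$ contribution, while contractions near the end of the bar supply the shuffle rearrangement turning $\delta(\tau)$ into $\lambda$.
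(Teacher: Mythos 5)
Your overall strategy coincides with the paper's: both identify $\xi$ with the $\bullet$-action of the element $\sum_\alpha\bigl[x^\alpha\bigm|\overline{\partial}{}_\alpha\bigr]\in B_2G(SV)$ and compute $[\delta,\xi]$ as the action of the differential of that element in $BG(SV)$, split into the same three structural contributions, with the cup-product term producing $\rho-\tau$ via \eqref{rhotau} and the term $\sum_\alpha\bigl[x^\alpha\bigm|[m_{(2)},\overline{\partial}{}_\alpha]\bigr]$ producing $\tau-\lambda$. The genuine gap is your treatment of the internal differential of $V$. You assert that $\bigl[[m,x^\alpha]\bigm|\overline{\partial}{}_\alpha\bigr]$ vanishes because $x^\alpha$ is central, and that $[m,\overline{\partial}{}_\alpha]$ is concentrated in the constant-term correction because $\partial_\alpha$ is a derivation. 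Both claims are correct only for the $m_{(2)}$ (product) component of $m$. The $m_{(1)}=d$ component contributes $\bigl[dx^\alpha\bigm|\overline{\partial}{}_\alpha\bigr]$ and $\pm\bigl[x^\alpha\bigm|[d,\overline{\partial}{}_\alpha]\bigr]$, and neither term vanishes individually when $V$ carries a nonzero differential: writing $dx^\alpha=\sum_\beta M^\alpha_\beta x^\beta$, one has $[d,\overline{\partial}{}_\alpha]=\mp\sum_\beta M^\beta_\alpha\overline{\partial}{}_\beta$, and the two contributions cancel only after summing over $\alpha$ and relabelling indices. The paper devotes a separate displayed computation to exactly this cancellation (the statement $[\delta_1,\xi]=0$). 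As written, your argument is valid only for $V$ with zero differential, which is the de~Wilde--Lecomte setting but not the one needed here.

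A second, lesser, issue is that the step on which the whole identity turns --- that the action of $\sum_\alpha\bigl[x^\alpha\bigm|[m_{(2)},\overline{\partial}{}_\alpha]\bigr]$ equals $\delta(\tau)-\lambda$, so that the $\tau$ contributions cancel against those from the cup-product term and leave $\rho-\lambda$ --- is described qualitatively and then deferred as an acknowledged obstacle. This is precisely the content of the paper's final displayed identity in the proof: it is where the operator $\lambda$, with its characteristic migration of $\tau a_j$ to an inserted position rather than an in-place substitution, actually emerges, together with the signs $(-1)^{(|a_j|+1)(\omega_{j-1}-\omega_{i-1})}$. Your description of the mechanism (the identity $\sum_\alpha\epsilon(\partial_\alpha a)\,x^\alpha=\tau a$ absorbing the inserted $x^\alpha$) is the right one, but until that computation is carried out the proof is a plausible outline rather than a complete argument.
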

\begin{proof}
  Introduce the matrix for the differential $d:V\to V$ of $V$:
  \begin{equation*}
    dx^\alpha = \sum_\beta M^\alpha_\beta x^\beta .
  \end{equation*}
  The differential $[\delta,\xi]$ of $\xi$ is the sum of $[\delta_1,\xi]$ and
  $[\delta_2,\xi]$. The first of these vanishes:
  \begin{equation*}
    \sum_\alpha \delta_1 \bigl[ x^\alpha| \overline{\partial}{}_\alpha \bigr] =
    \sum_{\alpha,\beta} \Bigl( \bigl[ M^\alpha_\beta x^\beta \bigm| \overline{\partial}{}_\alpha \bigr] -
    \bigl[ x^\alpha \bigm| M^\beta_\alpha \overline{\partial}{}_\beta \bigr] \Bigr) = 0 .
  \end{equation*}
  The second equals the action of the element
  \begin{equation*}
    \sum_\alpha \delta_2 \bigl[ x^\alpha| \overline{\partial}{}_\alpha \bigr]
    = \sum_\alpha \biggl( \bigl[x^\alpha \cup \overline{\partial}{}_\alpha\bigr]
    + \bigl[ m_{(2)} \circ x^\alpha \bigm| \overline{\partial}{}_\alpha\bigr]
    - (-1)^{|x^\alpha|}  \bigl[ x^\alpha \bigm| [ m_{(2)} , \overline{\partial}{}_\alpha ]
    \bigr] \biggr)
  \end{equation*}
  of $BG(SV)$ on $BSV$. The first term equals $\rho-\tau$ by \eqref{rhotau},
  while the second term vanishes since $SV$ is graded commutative.
  The two-cochain $[m_{(2)} , \overline{\partial}{}_\alpha ] \in G(SV)$ equals
  \begin{equation*}
    \bigl( [ m_{(2)} , \overline{\partial}{}_\alpha ] \bigr)[a_1|a_2] =
    - (-1)^{|x^\alpha|} \, \epsilon(\partial_\alpha a_1) a_2 + (-1)^{|a_1||x^\alpha|} a_1 \epsilon(\partial_\alpha
    a_2) .
  \end{equation*}
  It follows that
  \begin{multline*}
    \sum_\alpha \bigl[ x^\alpha \bigm| [ m_{(2)} , \overline{\partial}{}_\alpha ] \bigr] \bullet
    [a_1|\ldots|a_k] \\
    \begin{aligned}
      &= \sum_{1\le i\le j<k} (-1)^{(|a_j|+1)(\omega_{j-1}-\omega_{i-1})}
        [a_1|\ldots|a_{i-1}|\tau a_j|a_i|\ldots|\widehat{a}{}_j|\ldots|a_k] \\
      &\quad - \sum_{1\le i\le j<k} (-1)^{(|a_{j+1}|+1)(\omega_j-\omega_{i-1})}
        [a_1|\ldots|a_{i-1}|\tau a_{j+1}|a_i|\ldots|\widehat{a}{}_{j+1}|\ldots|a_k] \\
      &= ( \tau - \lambda ) [a_1|\ldots|a_k] .
    \end{aligned}
  \end{multline*}
  The result follows.
\end{proof}

\begin{corollary}
  $[\delta,\lambda]=0$
\end{corollary}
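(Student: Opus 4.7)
The plan is to deduce the corollary directly from the preceding proposition by taking a commutator with $\delta$ on both sides. Rearranging Proposition~\ref{dWL} gives
\begin{equation*}
  \lambda = \rho - [\delta,\xi] ,
\end{equation*}
so that
\begin{equation*}
  [\delta,\lambda] = [\delta,\rho] - [\delta,[\delta,\xi]] ,
\end{equation*}
and it suffices to show that each of the two terms on the right vanishes.

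For the first term, I would invoke the observation made just before the definition of $\xi$ and $\lambda$: because the product on $SV$ is homogeneous of degree $0$, the coderivation $\rho=\delta(\rho)$ (which measures word length in generators) commutes with the codifferential $\delta$ on $BSV$, giving $[\delta,\rho]=0$.

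For the second term, I would appeal to the graded Jacobi identity for graded commutators of operators on $BSV$. Since $\delta$ has degree $1$, applying the Jacobi identity with $A=B=\delta$ and $C=\xi$ yields
\begin{equation*}
  2\,[\delta,[\delta,\xi]] = [[\delta,\delta],\xi] = [2\delta^2,\xi] = 0 ,
\end{equation*}
where the last equality uses $\delta^2=0$, which holds because $\delta$ is a codifferential. Combining the two vanishings gives $[\delta,\lambda]=0$, completing the proof.

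There is no real obstacle here; the only thing to be careful about is the signs in the graded Jacobi identity, but since $\delta$ has odd degree the usual anticommutator manipulations apply, and the argument is purely formal.
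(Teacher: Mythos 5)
Your argument is correct and coincides with the paper's own proof: both write $[\delta,\lambda]=[\delta,\rho]-[\delta,[\delta,\xi]]$ using Proposition~\ref{dWL}, kill the first term by the already-noted commutation of $\rho$ with $\delta$, and kill the second via the graded Jacobi identity together with $\delta^2=0$. Nothing further is needed.
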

\begin{proof}
  We have
  \begin{equation*}
    [\delta,\lambda] = [\delta,\rho - (\rho-\lambda) ] = [\delta,\rho] - [\delta,[\delta,\xi]] .
  \end{equation*}
  The second term vanishes since $\delta^2=0$.
\end{proof}

\begin{proposition}
  \label{derivation}
  \mbox{}
  \begin{enumerate}[1)]
  \item $\xi^2=0$
  \item The operator $\xi$ is a derivation of the shuffle product on
    $BSV$.
  \end{enumerate}
\end{proposition}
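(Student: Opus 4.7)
The approach is to regard $\xi$ as the action $\Phi\bullet(-)$ of the element $\Phi=\sum_\alpha[x^\alpha|\overline{\partial}{}_\alpha]\in B_2G(SV)$ on $BSV$, so that $\xi^2$ is the action of $\Phi\cdot\Phi$, and to expand both $\xi^2[a_1|\ldots|a_k]$ and $\xi(X\sh Y)$ directly using the brace-action formula from \S2.

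For part~(1), I would expand $\xi^2$ as a double sum over pairs of insertion positions (for $x^\alpha,x^\beta$) and pairs of derivation sites, and case-split on where the outer $\overline{\partial}{}_\beta$ lands. If it lands on the freshly inserted $x^\alpha$, the contribution vanishes term by term, since $\partial_\beta x^\alpha-\epsilon(\partial_\beta x^\alpha)=\delta^\alpha_\beta-\delta^\alpha_\beta=0$. In every remaining term both $\overline{\partial}{}_\alpha$ and $\overline{\partial}{}_\beta$ act on original entries $a_m$ (possibly the same), and I pair the $(\alpha,\beta)$-contribution with its $(\beta,\alpha)$-partner, which produces the same underlying bar monomial after relabelling the two insertions and the two derivations. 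Graded commutativity of partials on the symmetric algebra, $\partial_\alpha\partial_\beta=(-1)^{|x^\alpha||x^\beta|}\partial_\beta\partial_\alpha$, combined with the Koszul transposition sign $(-1)^{(|x^\alpha|-1)(|x^\beta|-1)}$ for exchanging the two suspended bar entries $x^\alpha,x^\beta$, forces the paired contributions to be opposite in sign, so they cancel.

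For part~(2), I would prove the Leibniz identity
\[ \xi(X\sh Y)=(\xi X)\sh Y+(-1)^{|X|}X\sh(\xi Y) \]
for $X\in B_kSV$, $Y\in B_\ell SV$ by a bijection of monomials. A term in $\xi(X\sh Y)$ is specified by a shuffle $\sigma\in S(k,\ell)$, an insertion position $i$, and a derivation position $j>i$ in the resulting length-$(k+\ell)$ sequence; the derivation lands on either an $X$-entry or a $Y$-entry. In the $X$-case, absorbing the inserted $x^\alpha$ into the $X$-group produces a shuffle $\sigma'\in S(k+1,\ell)$ together with a configuration arising from $\xi X$, giving a term of $(\xi X)\sh Y$; the $Y$-case is symmetric and yields a term of $X\sh(\xi Y)$. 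Because shuffles preserve the relative order within each group, the brace constraint $i<j$ translates to the analogous constraint inside $\xi X$ or $\xi Y$, and every monomial on the right-hand side occurs exactly once under the bijection.

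The main obstacle in both parts is the Koszul sign audit. In~(1), the cancellation rests on the transposition sign for swapping bar entries of suspended degrees $|x^\alpha|-1,|x^\beta|-1$ precisely absorbing the sign from graded-commuting their partials. In~(2), the factor $(-1)^{|X|}$ is the standard Leibniz sign for the odd-degree operator $\xi$ (which has degree $-1$, since inserting $x^\alpha$ contributes suspended degree $|x^\alpha|-1$ while applying $\partial_\alpha$ lowers the affected entry's degree by $|x^\alpha|$), and emerges from comparing \eqref{sgn} when the inserted $x^\alpha$ sits in the full shuffle versus within $\xi Y$ alone.
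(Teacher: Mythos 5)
Your proposal is correct in outline but follows a genuinely different route from the paper. The paper avoids all term-by-term sign bookkeeping by a structural coalgebra argument: it records the identity $\Delta\xi=\sum_\alpha[x^\alpha]\otimes[\overline{\partial}{}_\alpha]$, deduces from it (together with an induction on bar degree) that $\xi^2 f$ and the Leibniz defect $\zeta(f,g)=\xi(f\sh g)-(\xi f)\sh g-(-1)^{|f|}f\sh(\xi g)$ are \emph{primitive} elements of $BSV$, and then invokes the fact that $BSV$ has no primitives outside $B_1SV$ to conclude both vanish. That one observation is the key lemma your argument is missing, and it is what makes the paper's proof three lines long. Your direct combinatorial verification is viable --- the identification of the cancelling mechanisms is right: $\overline{\partial}{}_\beta x^\alpha=\delta^\alpha_\beta-\delta^\alpha_\beta=0$ kills the terms where the second derivation hits the freshly inserted generator, the surviving terms of $\xi^2$ cancel under the involution that exchanges the temporal order of the two elementary (insert, differentiate) operations, and the shuffle bijection for the Leibniz rule is correct because shuffles preserve the relative order within each block. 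What your route buys is explicitness; what it costs is exactly the sign audit you defer to the end, and that audit is heavier than your sketch suggests. In part (1), the product $(-1)^{|x^\alpha||x^\beta|}\cdot(-1)^{(|x^\alpha|-1)(|x^\beta|-1)}=(-1)^{|x^\alpha|+|x^\beta|+1}$ of the two signs you name equals $-1$ only when $|x^\alpha|+|x^\beta|$ is even, so these two signs alone do not force cancellation: the prefactors $(-1)^{\omega_i+|x^\alpha|(\omega_{j-1}-\omega_i+1)}$ from the two successive applications of $\xi$ (evaluated on the once-modified word the second time) must also be tracked, and it is their interaction with the transposition sign that closes the argument. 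If you intend to complete the proof along your lines, that computation must actually be carried out; otherwise I would recommend adopting the primitivity argument, which subsumes both parts and all signs at once.
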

\begin{proof}
  We have
  \begin{equation}
    \label{Delta xi}
    \Delta \xi = \sum_\alpha [x^\alpha] \otimes [\overline{\partial}{}_\alpha] .
  \end{equation}
  Squaring both sides of \eqref{Delta xi}, we see that
  $\xi^2\in BG(SV)$ is primitive: $\Delta\xi^2=0$. We prove that
  $\xi^2f=0$ for $f\in B_kSV$ by induction on $k$: it is clearly true for
  $k=1$. By the induction hypothesis, $\xi^2f\in B_{k+2}SV$ is
  primitive. But the space of primitive elements of $SV$ equals
  $B_1SV$, hence $\xi^2f=0$.

  Let $\zeta(f,g)=\xi(f\sh g)-(\xi f)\sh g-(-1)^{|f|}\,f\sh(\xi g)$. By
  \eqref{Delta xi}, we see that for all $f,g\in BSV$,
  \begin{multline*}
    \Delta \zeta(f,g) = (-1)^{|f^{(2)}||g^{(1)}|} \Bigl( \zeta\bigl(
    f^{(1)},g^{(1)} \bigr) \otimes \bigl( f^{(2)}\sh g^{(2)} \bigr) \\
    + (-1)^{|f^{(1)}|+|g^{(1)}|} \bigl( f^{(1)}\sh g^{(1)} \bigr) \otimes
    \zeta\bigl( f^{(2)},g^{(2)} \bigr) \Bigr) .
  \end{multline*}
  Here, the possible cross-terms cancel, because
  $[x^\alpha]\bullet (f\sh g) = [x^\alpha] \sh (f\sh g)$ and
  \begin{equation*}
    [\overline{\partial}{}_\alpha]\bullet (f\sh g) = ( [\overline{\partial}{}_\alpha]\bullet f ) \sh g +
    (-1)^{|x^\alpha||f|} f \sh ( [\overline{\partial}{}_\alpha]\bullet g) .
  \end{equation*}
  We prove that $\zeta(f,g)=0$ for $f\in B_kSV$ and $g\in B_\ell SV$ by induction
  on $k+\ell$; it is clearly true for $k+\ell=2$. By the induction
  hypothesis, $\zeta(f,g) \in B_{k+\ell+1}SV$ is primitive, hence it vanishes.
\end{proof}

\begin{corollary}
  $[\xi,\lambda]=0$
\end{corollary}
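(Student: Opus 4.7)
The plan is to reduce $[\xi,\lambda]$ to expressions already controlled. Using Proposition~\ref{dWL}, I would rewrite $\lambda = \rho - [\delta,\xi]$, obtaining
\[
  [\xi,\lambda] = [\xi,\rho] - [\xi,[\delta,\xi]].
\]

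Then I would dispose of each term in turn. The first commutator vanishes because $\xi$ commutes with $\rho$, as observed immediately before Proposition~\ref{dWL} (both operators respect the weight grading on $SV$, with $\rho$ acting as multiplication by the weight).

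For the second term, I would use that $\xi$ has odd degree $-1$, so $|[\delta,\xi]|=0$ and the graded commutator reads $[\xi,[\delta,\xi]] = \xi[\delta,\xi] - [\delta,\xi]\xi$. Expanding $[\delta,\xi]=\delta\xi+\xi\delta$ and cancelling the two copies of $\xi\delta\xi$ leaves $\xi^2\delta - \delta\xi^2$, which vanishes by $\xi^2=0$ from Proposition~\ref{derivation}(1). Hence $[\xi,\lambda]=0$. There is essentially no obstacle: the corollary falls out of a direct graded-commutator manipulation once the three results $[\delta,\xi]=\rho-\lambda$, $[\xi,\rho]=0$, and $\xi^2=0$ are in hand, and the only point requiring care is tracking the signs in the graded commutator involving the odd-degree operator $\xi$ so that the middle $\xi\delta\xi$ terms really cancel.
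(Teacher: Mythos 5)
Your proof is correct and follows essentially the same route as the paper: the same decomposition $[\xi,\lambda]=[\xi,\rho]-[\xi,[\delta,\xi]]$, with the first term killed by $[\xi,\rho]=0$ and the second by $\xi^2=0$ (the paper phrases the latter as $\tfrac12[\delta,\xi^2]=0$ via the graded Jacobi identity, where you expand the commutators by hand, but this is the same computation).
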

\begin{proof}
  We have
  \begin{equation*}
    [\xi,\lambda] = [\xi,\rho - (\rho-\lambda) ] = [\xi,\rho] - [\xi,[\delta,\xi]] .
  \end{equation*}
  The first term has been seen to vanish, while the second term
  equals $\frac12[\delta,\xi^2]=0$.
\end{proof}

\begin{corollary}
  The operator $\lambda$ is a derivation with respect to the shuffle
  product:
  \begin{equation*}
    \lambda(x\sh y) = \lambda x\sh y + x\sh\lambda y .
  \end{equation*}
\end{corollary}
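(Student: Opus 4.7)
The plan is to write $\lambda$ as a linear combination of known derivations of the shuffle product and then invoke the standard fact that the graded commutator of two graded derivations is itself a graded derivation. By Proposition~\ref{dWL}, we have
\[
\lambda = \rho - [\delta,\xi] ,
\]
so it suffices to verify that each of $\rho$, $\delta$, and $\xi$ is a graded derivation of $\sh$.

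The derivation property of $\delta$ was recorded at the start of this section as a consequence of the graded commutativity of $SV$, and the derivation property of $\xi$ is assertion~2) of Proposition~\ref{derivation}. The only point that needs a short verification is that $\rho$ is a derivation of $\sh$, and I would argue this as follows. By definition, $\rho$ is the coderivation on $BSV$ associated with the one-cochain on $SV$ given by multiplication by $k$ on $S^kV$, and so it acts on $[a_1|\ldots|a_n]$ as scalar multiplication by the total symmetric degree $k_1+\cdots+k_n$, where $a_j\in S^{k_j}V$. Every bar monomial appearing in $x\sh y$ is a rearrangement of the entries of $x$ and $y$, so it carries the same total symmetric degree $\text{sym-deg}(x)+\text{sym-deg}(y)$. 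This gives $\rho(x\sh y)=\rho(x)\sh y+x\sh\rho(y)$ at once.

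With these three ingredients in hand, the graded commutator $[\delta,\xi]$ of the degree-$1$ derivation $\delta$ and the degree-$(-1)$ derivation $\xi$ is a derivation of degree $0$, so $\lambda=\rho-[\delta,\xi]$ is a degree-$0$ derivation as well. Since $|\lambda|=0$, no signs enter the Leibniz identity, and we obtain the desired equality $\lambda(x\sh y)=\lambda(x)\sh y+x\sh\lambda(y)$. There is no serious obstacle: the only genuine computation is the action of $\rho$ on a bar monomial, which is immediate from the definition of $\rho$.
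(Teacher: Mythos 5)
Your proof is correct and follows the same route as the paper: both write $\lambda=\rho-[\delta,\xi]$ via Proposition~\ref{dWL} and combine the derivation properties of $\delta$ (from graded commutativity of $SV$), $\xi$ (Proposition~\ref{derivation}), and $\rho$. The only difference is that you spell out why $\rho$ is a derivation, which the paper dismisses as clear.
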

\begin{proof}
  Since $SV$ is a graded commutative algebra, $\delta$ is a graded
  derivation with respect to the shuffle product, hence so is the
  graded commutator $[\delta,\xi]$. But it is clear that $\rho$ is a derivation
  with respect to the shuffle product, and the result follows.
\end{proof}

\begin{lemma}
  The descending factorial
  \begin{equation*}
    (\lambda)_j = \lambda(\lambda-1)\ldots(\lambda-j+1)
  \end{equation*}
  is given by the formula
  \begin{equation*}
    (\lambda)_j[a_1|\ldots|a_k] = [a_1|\ldots|a_{k-j}] \sh [\tau a_{k-j+1}] \sh \cdots \sh [\tau
    a_k] .
  \end{equation*}
\end{lemma}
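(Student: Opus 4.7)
The plan is a straightforward induction on $j$, with base case $j=1$ given by the very definition of $\lambda$ (interpreting the empty left factor as the shuffle unit, so that $\lambda[a_1] = [\tau a_1]$).

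For the inductive step, assume the formula at level $j$ and apply $\lambda$ to both sides. By the preceding corollary, $\lambda$ is a derivation with respect to the shuffle product, and since it has degree zero no signs appear; the Leibniz rule on the $(j+1)$-fold shuffle product gives
\begin{equation*}
  \lambda (\lambda)_j [a_1|\ldots|a_k] = \bigl( \lambda[a_1|\ldots|a_{k-j}] \bigr) \sh [\tau a_{k-j+1}] \sh \cdots \sh [\tau a_k] + \sum_{i=k-j+1}^k [a_1|\ldots|a_{k-j}] \sh \cdots \sh \lambda[\tau a_i] \sh \cdots \sh [\tau a_k] .
\end{equation*}
By the defining formula for $\lambda$, the first summand equals $[a_1|\ldots|a_{k-j-1}] \sh [\tau a_{k-j}] \sh \cdots \sh [\tau a_k]$, which is precisely the right-hand side of the lemma at level $j+1$.

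The key observation for the remaining $j$ terms is that $\lambda[\tau a] = [\tau a]$: since $\tau$ is the projection $SV \to V = S^1V$ it is idempotent, so $\lambda[\tau a] = [\tau(\tau a)] = [\tau a]$. Hence each of the $j$ remaining summands simply reproduces $(\lambda)_j[a_1|\ldots|a_k]$. Collecting, $(\lambda - j)(\lambda)_j[a_1|\ldots|a_k]$ equals the expression sought at level $j+1$, and since $(\lambda)_{j+1} = (\lambda - j)(\lambda)_j$, the induction closes.

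The main obstacle is genuinely minor: it is the small interpretive step that $\lambda[\tau a] = [\tau a]$, which requires the shuffle-unit convention together with idempotence of $\tau$. Once this is granted, the derivation property of $\lambda$ does all the real work.
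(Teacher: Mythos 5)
Your proof is correct and is essentially the paper's argument: the paper's own (one-sentence) proof cites exactly the same three ingredients — induction on $j$, the explicit formula for $\lambda$, the identity $\lambda[x^\alpha]=[x^\alpha]$ (your $\lambda[\tau a]=[\tau a]$), and the derivation property of $\lambda$ for the shuffle product. Your write-up simply fills in the Leibniz-rule bookkeeping that the paper leaves implicit.
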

\begin{proof}
  This is proved by induction on $j$, using the explicit formula for
  $\lambda$, the formula $\lambda[x^\alpha]=[x^\alpha]$, and the fact that $\lambda$ is a
  derivation for the shuffle product.
\end{proof}

\begin{corollary}
  \label{minimal}
  Let $\lambda_k$ and $p_k$ be the restrictions of $\lambda$ and
  $p$ to $B_kSV$. Then
  \begin{equation*}
    (\lambda_k)_k = k!\,p_k ,
  \end{equation*}
  while $(\lambda_k)_i=0$ for $i>k$.
\end{corollary}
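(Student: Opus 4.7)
The plan is to derive both parts of the corollary directly from the preceding lemma together with the fact that $\lambda$ is a derivation of the shuffle product.

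For the first equality, I would simply set $j=k$ in the lemma's formula. This yields
\begin{equation*}
  (\lambda_k)_k[a_1|\ldots|a_k] = [\tau a_1] \sh [\tau a_2] \sh \cdots \sh [\tau a_k],
\end{equation*}
where the leftmost factor $[a_1|\ldots|a_{k-j}]$ disappears because $k-j=0$ (we are in the non-counital setting, so there is no $B_0SV$ term and the shuffle identity element does not appear). By the definition of $p_k$ in \eqref{pi}, the right-hand side is exactly $k!\,p_k[a_1|\ldots|a_k]$.

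For the vanishing of $(\lambda_k)_i$ when $i>k$, it suffices to show $(\lambda_k)_{k+1}=0$, since $(\lambda_k)_i = (\lambda_k)_{k+1}\cdot(\lambda-k-1)\cdots(\lambda-i+1)$ as operators on $B_kSV$. Writing $(\lambda_k)_{k+1} = (\lambda - k)(\lambda_k)_k$, I must verify that $\lambda$ acts as multiplication by $k$ on the image of $(\lambda_k)_k$. That image is spanned by shuffles $[\tau a_1]\sh\cdots\sh[\tau a_k]$ whose individual factors $[\tau a_i]$ lie in $sV \subset B_1SV$. Since $\tau$ restricted to $V$ is the identity, $\lambda[\tau a_i] = [\tau(\tau a_i)] = [\tau a_i]$, and since $\lambda$ is a derivation of the shuffle product (the corollary immediately preceding the lemma), applying $\lambda$ to a $k$-fold shuffle of such length-one terms gives $k$ times that shuffle.

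I do not expect any genuine obstacle: the first statement is essentially a tautology once the lemma is in hand, and the second reduces to the observation that elements of $(sV)^{\sh k}$ are eigenvectors of $\lambda$ with eigenvalue equal to their shuffle length.
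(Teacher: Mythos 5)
Your argument is correct and follows exactly the route the paper intends: the paper states this corollary without proof as an immediate consequence of the preceding lemma (set $j=k$ and compare with \eqref{pi}), and your verification that $(\lambda-k)$ annihilates the image of $(\lambda_k)_k$ — using $\lambda[\tau a_i]=[\tau a_i]$ and the derivation property of $\lambda$ for the shuffle product — is precisely the content of the lemma's own inductive proof pushed one step further. No gaps.
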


In particular, we recover one of the results of de~Wilde and Lecomte,
that the minimal polynomial of $\lambda_k$ divides
$\lambda_k(\lambda_k-1)\ldots(\lambda_k-k)$, and $\lambda_k$ is a semisimple endomorphism whose
spectrum is contained in (and in fact equals) $\{0,\dots,k\}$. The
eigenspace $\{\lambda_k=k\}$ is $\Lambda_kV\subset B_kSV$. The spectrum of the
restriction $\rho_k$ of $\rho$ to $B_kSV$ equals $\{k,k+1,\ldots\}$, and the
subspace $\{\lambda_k=k\}$ is a subset of $\{\rho_k=k\}$. Since $\rho$ and
$\lambda$ commute, the eigenvalues of $\rho_k-\lambda_k$ are nonnegative integers,
and the kernel of $\rho_k-\lambda_k$ is the subspace on which
$\rho_k=\lambda_k=k$, namely $\Lambda^kV$.

We now modify the operator $\xi$ to obtain a homotopy
\begin{equation*}
  h = (\rho-\lambda)^{-1}\xi .
\end{equation*}
This operator is defined because $p\xi=0$, and the eigenvalues of
$\rho-\lambda$ on the image of the idempotent $1-p$ are strictly positive. It
is clear that $h^2=0$, since $\xi^2=0$. It follows that $hp=ph=0$.

We have proved the following theorem.
\begin{theorem}
  The morphism of dg commutative bialgebras
  $g:\Lambda V\hookrightarrow BSV$, the morphism of dg commutative algebras
  $f:BSV \to \Wedge V$, and the homotopy $h=(\rho-\lambda)^{-1}\xi$, form a
  contraction $(BSV,\Wedge V,f,g,h)$
\end{theorem}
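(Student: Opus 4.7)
My plan is to verify the four defining properties of a contraction in turn: $fg = 1_{\Wedge V}$, the homotopy equation $1 - gf = \delta h + h\delta$, and the side conditions $fh = 0$, $hg = 0$, $h^2 = 0$. Essentially all of the real work has already been carried out in the preceding propositions and corollaries, so the theorem is a brief assembly of those pieces.

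I first dispatch $fg = 1_{\Wedge V}$: this is immediate from the definition $f = g^{-1}p$ together with $pg = g$, which holds because $p$ projects onto $g(\Wedge V)$. Since $gf = p$, the homotopy equation reduces to $[\delta, h] = 1 - p$. For this I plan to invoke Proposition \ref{dWL} and commute $(\rho-\lambda)^{-1}$ past $\delta$. Both $\rho$ and $\lambda$ commute with $\delta$ (the former because the product on $SV$ is homogeneous, the latter by the Corollary above), so the operator $(\rho-\lambda)^{-1}$, defined as the genuine inverse of $\rho - \lambda$ on the image of $1-p$ and as zero on its kernel $\Wedge V$, also commutes with $\delta$. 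Thus
\begin{equation*}
  [\delta, h] = (\rho-\lambda)^{-1}[\delta,\xi] = (\rho-\lambda)^{-1}(\rho-\lambda) = 1 - p.
\end{equation*}

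For the side conditions, the same pattern applies: each identity reduces to a property of $\xi$ together with the fact that $(\rho-\lambda)^{-1}$ commutes with $\xi$, which follows from $[\xi,\rho] = 0$ (by inspection) and $[\xi,\lambda] = 0$ (by the Corollary). Concretely, $h^2 = 0$ follows at once from $\xi^2 = 0$ (Proposition \ref{derivation}(1)). The identity $ph = 0$ holds by construction, since the image of $(\rho-\lambda)^{-1}$ is complementary to $\Wedge V$; this gives $fh = g^{-1}ph = 0$. Finally, to get $hg = 0$ it suffices to check $\xi g = 0$, for which I plan to observe that $\Wedge V$ is spanned by shuffles of degree-one elements $[x^\alpha]$, and that $\xi$ annihilates any such element because $\partial_\alpha x^\beta \in \F$ is a scalar, so $\overline{\partial}{}_\alpha x^\beta = 0$.

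There is no genuine obstacle at this stage: all the technical content has been extracted into Proposition \ref{dWL}, Proposition \ref{derivation}, and the commutation relations established in the intervening corollaries, together with the spectral description in Corollary \ref{minimal} that guarantees $(\rho-\lambda)^{-1}$ is a well-defined self-commuting operator on the orthocomplement of $\Wedge V$.
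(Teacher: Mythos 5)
Your proposal is correct and follows essentially the same route as the paper, which states the theorem as a direct assembly of Proposition~\ref{dWL}, Proposition~\ref{derivation}, the intervening corollaries, and the spectral description of $\rho-\lambda$; you merely spell out a few steps the paper leaves implicit (e.g.\ deducing $hg=0$ from $\xi$ being a shuffle derivation killing each $[x^\alpha]$).
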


In the remainder of this section, we give an explicit formula for the
restriction $h_k:B_kSV\to B_{k+1}SV$ of the homotopy $h$ to $B_kSV$; it
is a finite sum, though the number of terms increases with $k$.
\begin{proposition}
  \begin{equation}
  \label{explicit}
    h_k [a_1|\ldots|a_k] = \sum_{j=1}^k \frac{ \xi[a_1|\ldots|a_j] } {\rho(\rho+1)\ldots(\rho+k-j)}
    \sh [\tau a_{j+1}] \sh \cdots \sh [\tau a_k]
  \end{equation}
\end{proposition}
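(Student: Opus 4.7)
Denote by $H_k[a_1|\ldots|a_k]$ the right-hand side of \eqref{explicit}, and abbreviate $A_j=[a_1|\ldots|a_j]$ and $y_i=[\tau a_i]$. Since $\rho-\lambda$ has strictly positive spectrum on the image of $1-p$ in $B_{k+1}SV$ by Corollary~\ref{minimal} and the discussion that follows, $h_k$ is uniquely characterized as the element of $(1-p)B_{k+1}SV$ satisfying $(\rho-\lambda)h_k=\xi$. The plan is to verify that $H_k$ has both of these properties. The containment $H_k\in(1-p)B_{k+1}SV$ is immediate from $p\xi=0$, the fact that polynomials in $\rho$ commute with $p$ (since $[\rho,\lambda]=0$), and the observation that shuffling with $y_{j+1}\sh\cdots\sh y_k$ preserves $(1-p)$-image because each $y_i\in\ker(\rho-\lambda)$ and $\rho-\lambda$ is a derivation of the shuffle product.

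For the equation $(\rho-\lambda)H_k=\xi$ on $B_kSV$, the ingredients are: $\rho$ and $\lambda$ are shuffle-derivations commuting with $\xi$; $(\rho-\lambda)y_i=0$ because $\tau a_i\in V=S^1V$ forces $\rho y_i=\lambda y_i=y_i$; $\xi$ is itself a shuffle-derivation by Proposition~\ref{derivation}; $\xi y_i=0$ because $\bar\partial_\alpha$ vanishes on $V$; and $\lambda A_j=A_{j-1}\sh y_j$. These facts collapse the $j$-th summand, after applying $\rho-\lambda$, to
\[
\frac{\xi\bigl((\rho-\lambda)A_j\bigr)}{\rho(\rho+1)\cdots(\rho+k-j)}\sh y_{j+1}\sh\cdots\sh y_k=\frac{\rho\xi A_j-\xi A_{j-1}\sh y_j}{\rho(\rho+1)\cdots(\rho+k-j)}\sh y_{j+1}\sh\cdots\sh y_k.
\]
In the $\rho\xi A_j$ half, cancelling $\rho$ yields $\frac{\xi A_j}{(\rho+1)\cdots(\rho+k-j)}\sh y_{j+1}\sh\cdots\sh y_k$, whose $j=k$ term (with empty denominator) equals $\xi A_k$. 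For the $-\xi A_{j-1}\sh y_j$ half, the operator identity $P(\rho)^{-1}(u\sh y_j)=P(\rho+1)^{-1}u\sh y_j$, which holds because shuffling with $y_j$ shifts the $\rho$-eigenvalue by $+1$, rewrites each summand as $\frac{\xi A_{j-1}}{(\rho+1)\cdots(\rho+k-j+1)}\sh y_j\sh y_{j+1}\sh\cdots\sh y_k$; reindexing $j\mapsto j+1$ (the boundary term vanishing because $\xi A_0=0$) matches it term-by-term to the first half for $j=1,\ldots,k-1$. All terms cancel except the $j=k$ term of the first half, leaving $\xi A_k=\xi[a_1|\ldots|a_k]$, as required.

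The main technical ingredient is the grading-shift identity $P(\rho)^{-1}(u\sh y_j)=P(\rho+1)^{-1}u\sh y_j$, which aligns the two halves for cancellation; the rest is bookkeeping with the derivation properties of $\rho$, $\lambda$, and $\xi$ for the shuffle product.
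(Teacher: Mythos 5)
Your proof is correct and follows essentially the same route as the paper: the cancellation you exhibit between the $\rho\xi A_j$ half of the $j$-th term and the $\lambda$-half of the adjacent term is exactly the paper's telescoping identity $(\rho-\lambda)(\rho)_{j+1}^{-1}(\lambda)_j\xi=(\rho)_j^{-1}(\lambda)_j\xi-(\rho)_{j+1}^{-1}(\lambda)_{j+1}\xi$, unpacked by the same commutation relations, shuffle-derivation properties, and eigenvalue shift of $\rho$. The only organisational difference is that you verify the explicit formula against the characterisation of $h_k[a_1|\ldots|a_k]$ as the unique solution of $(\rho-\lambda)w=\xi[a_1|\ldots|a_k]$ in the image of $1-p$ (making explicit a containment the paper leaves implicit), whereas the paper first derives the operator identity $h_k=\sum_{j=0}^k(\rho)_{j+1}^{-1}(\lambda)_j\xi$ and then rewrites each summand.
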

\begin{proof}
  Since the eigenvalues of the action of $\rho$ on $B_{k+1}SV$ lie in
  $\{k+1,k+2,\ldots\}$, the polynomial $(\rho)_{j+1}$ is invertible on
  $B_{k+1}SV$ for $0\le j\le k$. We have
  \begin{align*}
    (\rho-\lambda) (\rho)_{j+1}^{-1}(\lambda)_j \xi &= \bigl( (\rho-j) - (\lambda-j) \bigr)
                                  (\rho)_{j+1}^{-1} (\lambda)_j \xi \\
    &= (\rho)_j^{-1} (\lambda)_j \xi - (\rho)_{j+1}^{-1} (\lambda)_{j+1} \xi .
  \end{align*}
  It follows that
  \begin{align*}
    (\rho-\lambda) \sum_{j=0}^k (\rho)_{j+1}^{-1}(\lambda)_j \xi
    &= \sum_{j=0}^k \biggl( (\rho)_j^{-1} (\lambda)_j \xi - (\rho)_{j+1}^{-1} (\lambda)_{j+1}
      \xi \biggr) \\
    &= \xi - (\rho)_{k+1}^{-1} (\lambda)_{k+1} \xi .
  \end{align*}
  Since $(\lambda)_{k+1}$ vanishes on $B_kSV$, we see that
  $(\lambda)_{k+1}\xi=\xi(\lambda)_{k+1}=0$, proving the formula
  \begin{equation*}
    h_k = \sum_{j=0}^k (\rho)_{j+1}^{-1}(\lambda)_j \xi .
  \end{equation*}
  Since $\xi$ is a graded derivation for the shuffle product, we see
  that
  \begin{equation*}
    \xi(\lambda)_j [a_1|\ldots|a_k] = \xi[a_1|\ldots|a_{k-j}] \sh [\tau a_{k-j+1}] \sh \cdots \sh
    [\tau a_k] .
  \end{equation*}
  Since
  \begin{multline*}
    \rho \Bigl( [a_1|\ldots|a_{k-j}] \sh [\tau a_{k-j+1}] \sh \cdots \sh
    [\tau a_k] \Bigr) \\
    = \Bigl( (\rho+j) [a_1|\ldots|a_{k-j}] \Bigr) \sh [\tau a_{k-j+1}] \sh \cdots \sh
    [\tau a_k] ,
  \end{multline*}
  the proposition follows.
\end{proof}

Halbout \cite{Halbout} and Dippel et al.\ \cite{DESW} have studied
contracting homotopies on $BSV$ that extend to other classes of
function algebras, such as smooth and analytic functions, when
$\F=\R$. We see from the above formula for $h$ that it may also be
applied in these more general cases. We discuss only the smooth case,
but other function algebras are handled in the same way.

Let $U\subset V$ be a star-shaped open subset of the finite-dimensional real
vector space $V$. Substitite for the algebra $SV$ the algebra
$C^\infty_*(U)$ of smooth functions on $U$ vanishing at $0\in U$, and the bar
construction by the direct sum
\begin{equation*}
  B_\infty C^\infty_*(U) = \bigoplus_{k=1}^\infty s^k C_*^\infty(U,k) ,
\end{equation*}
where $C^\infty_*(U,k)$ is the subspace of smooth functions on $U^k$ that
vanish at $(u_1,\ldots,u_k)\in U^k$ if any of the $u_i$ equal
$0\in U$. The homotopy $h$ defines a continuous linear morphism from
$C^\infty_*(U,k)$ to $C^\infty_*(U,k+1)$; the definitions of the analogues of
$\tau$ and $\overline{\partial}{}_\alpha$ are clear, and the operation
$(\rho+\ell)^{-1}$ is given by the integral
\begin{equation*}
  (\rho+\ell)^{-1} f(u_1,\ldots,u_k) = \int_0^1 f(tu_1,\ldots,tu_k) \, t^{\ell-1} \, dt .
\end{equation*}
This clearly defines a continuous linear map from $C^\infty_*(U,k)$ to
itself if $\ell>0$, while for $\ell=0$, we have
\begin{align*}
  \rho^{-1} f(u_1,\ldots,u_k) &= \sum_{i=1}^k u_i \cdot (\rho+1)^{-1} \int_0^1
                        (\partial_{u_i}f)(su_1,\ldots,su_k) \, ds \\
  &= \sum_{i=1}^k u_i \cdot \int_0^1 \int_0^1
  (\partial_{u_i}f)(stu_1,\ldots,stu_k) \, ds \, dt \in C^\infty_*(U,k) .
\end{align*}
Since $B_kSV$ is dense in $s^kC^\infty_*(U,k)$, it is clear that all of the
formulas that we have proved in the algebraic setting hold for
function algebras.

\section{The cobar construction of an exterior coalgebra}

In this section, we translate our results on the bar construction of a
symmetric algebra to the dual situation, the cobar construction of an
exterior coalgebra: we construct a contraction from the cobar
construction $\Omega\Wedge V$ of an exterior coalgebra to the symmetric
algebra $SV$. Using homological perturbation theory, this contraction
is deformed to a contraction from the cobar construction $\Omega CL$ of the
Chevalley--Eilenberg complex of a dg Lie algebra $L$ to the universal
enveloping algebra $UL$ of $L$: the proof follows Baranovsky
\cite{Baranovsky}, with the difference that we work with an explicit,
and easily computable, formula for the homotopy contraction from
$\Omega\Wedge V$ to $SV$.

Let $C$ be a dg coalgebra. The cobar construction of $C$ is a
dg algebra whose underlying graded algebra is the tensor algebra
\begin{equation*}
  \Omega C = \bigoplus_{k=1}^\infty (s^{-1}C)^{\otimes k} .
\end{equation*}
We denote the element $s^{-1}a_1\otimes \cdots \otimes s^{-1}a_k$ by
$\<a_1|\cdots|a_k\>$. The differential of $\Omega C$ is
\begin{multline*}
  \delta\<a_1|\cdots|a_k\> \\
  = \sum_{j=1}^k (-1)^{\omega_{j-1}} \<a_1|\cdots|da_j|\cdots|a_k\> + \sum_{j=1}^k
  (-1)^{\omega_j+1} \<a_1|\cdots|a_j^{(1)}|a_j^{(2)}|\cdots|a_k\> .
\end{multline*}

The cobar algebra $\Omega C$ is a graded cocommutative bialgebra, with
coproduct given by a sum over coshuffles:
\begin{multline*}
  \<a_1|\cdots|a_k\> \mapsto \sum_{0<\ell<k} \sum_{\sigma\in S(\ell,k-\ell)} (-1)^{\sum_{i=1}^\ell (
    \omega_{\ell+\sigma(i)-i} - \omega_\ell ) (|a_i|-1)} \\
  \<a_{\sigma(1)}|\ldots|a_{\sigma(\ell)}\> \otimes \<a_{\sigma(\ell+1)}|\ldots|a_{\sigma(k)}\> .
\end{multline*}
The differential $\delta$ on $\Omega C$ is a coderivation for this coproduct if
and only if the graded coalgebra $C$ is cocommutative. The subcomplex
$P\Omega C$ of primitive elements of $\Omega C$ is the Harrison chain complex of
$C$: it is the Koszul dual dg Lie algebra to $C$.

As we have seen in the last section, the natural inclusion
$g:\Wedge V \hookrightarrow BSV$ of the exterior coalgebra of a cochain complex
$V$ into the bar construction $BSV$ of its symmetric algebra is a
morphism of dg commutative bialgebras. Similarly, there is a
surjective morphism of dg cocommutative bialgebras
$f:\Omega\Wedge V\to SV$, induced by projecting the generators
$s\Wedge V$ of $\Omega\Wedge V$ to the summand $s\Wedge_1V\cong V$, followed by
inclusion as the generators $V$ of $SV$. This morphism has a section
$g:SV\to\Omega\Wedge V$, which is the morphism of dg cocommutative coalgebras
induced by the inclusions
\begin{equation*}
  S^kV \hookrightarrow V^{\otimes k}\cong\Omega^k\Wedge_1V \hookrightarrow \Omega^k\Wedge V .
\end{equation*}

Let $\rho$ be the coderivation of $\Lambda V$ which acts by multiplication by
$k$ on the subspace $\Lambda_kV\subset \Lambda V$. This coderivation extends to a graded
derivation of $\Omega\Wedge V$:
\begin{equation*}
  \rho\<a_1|\cdots|a_k\> = \sum_{j=1}^k \<a_1|\cdots|\rho a_j|\cdots|a_k\> .
\end{equation*}
We see that $[\delta,\rho]=0$ on $\Omega\Wedge V$.

Let $\tau$ be the projection $\tau$ from $\Wedge V$ to
$\Wedge_1V\cong s^{-1}V$. Define maps
$\xi:\Omega^k\Wedge V\to\Omega^{k-1}\Wedge V$ and
$\lambda:\Omega^k\Lambda V\to\Omega^k\Wedge V$ by the formulas
\begin{align*}
  \xi\<a_1|\cdots|a_k\> &= \sum_{1\le i<j\le k} (-1)^{\omega_{i-1}+|a_i|(\omega_{j-1}-\omega_i+1)}
  \<a_1|\cdots|\widehat{a}_i|\cdots|\tau a_i\wedge a_j|\ldots|a_k\> , \\
  \lambda\<a_1|\cdots|a_k\> &= \sum_{j=1}^k (-1)^{(|a_j|-1)(\omega_k-\omega_{j+1})} \,
  \<|a_1|\cdots|\widehat{a}_j|\cdots|a_k|\tau a_j\> .
\end{align*}
The operators $\xi$ and $\lambda$ both commute with $\rho$.

The proofs of the last section apply to any additive symmetric
monoidal category tensored over the symmetric monoidal category of
finite-dimensional cochain complexes. (The proofs take place in finite
truncations $\bigoplus_{1\le k\le N} B_kA$ and
$\bigoplus_{1\le k\le N} S_kV$ of the bar construction and symmetric
algebra, so the proofs deal only in finite sums.) The opposite of the
category of cochain complexes is such a category, and this
substitution has the effect of exchanging the symmetric algebra with
the exterior coalgebra (after suspension of $V$), and the bar
construction with the cobar construction. In this way, we obtain the
following dual results.
\begin{enumerate}[a)]
\item $[\delta,\xi] = \rho - \lambda$
\item $\xi$ and $\lambda$ are (graded) coderivations with respect to the
  shuffle coproduct on $\Omega\Wedge V$.
\item $\xi^2=0$
\item $[\delta,\lambda]=[\xi,\lambda]=0$
\item Let $\rho_k$ and $\lambda_k$ be the restrictions of $\lambda$ and
  $\rho$ to $\Omega^k\Lambda V$. The operators $\rho_k$ and
  $\lambda_k$ are commuting semisimple operators.
\item The eigenvalues of $\rho_k$ lie in $\{k,k+1,\ldots\}$.
\item The eigenvalues of $\lambda_k$ lie in $\{0,\ldots,k\}$,
  $(\lambda_k)_k=k!\,p_k$, and $(\lambda_k)_i=0$ for $i>k$.
\end{enumerate}

We have proved the following theorem.
\begin{theorem}
  The morphism of dg cocommutative coalgebras
  $g:SV\hookrightarrow \Omega\Wedge V$, the morphism of dg commutative bialgebras
  $f:\Omega\Wedge V\to SV$, and the homotopy
  \begin{equation*}
    h = (\rho-\lambda)^{-1}\xi ,
  \end{equation*}
  form a contraction $(\Omega\Wedge V,SV,f,g,h)$.
\end{theorem}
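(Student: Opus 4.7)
The plan is to transport verbatim the argument that established the contraction $(BSV,\Wedge V,f,g,h)$ at the end of the previous section, now using the dualised ingredients (a)--(g) listed immediately above the statement. Properties (f) and (g) tell us that on $\Omega^k\Wedge V$ the operator $\rho-\lambda$ vanishes precisely on the $k$-eigenspace of $\lambda_k$ (which coincides with the $k$-eigenspace of $\rho_k$), and has spectrum in $\{1,2,\ldots\}$ on the complementary eigenspaces. Once one identifies this kernel with the image of the idempotent $p=gf$, the operator $(\rho-\lambda)^{-1}$ is well-defined on the image of $1-p$, and $h=(\rho-\lambda)^{-1}\xi$ makes sense provided the output of $\xi$ lies in that image, which is the dual of the bar-side relation $p\xi=0$.

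First I would check $fg=1_{SV}$ directly from the definitions: $g$ factors through $\Omega\Wedge_1V$ via the inclusion $S^kV\hookrightarrow V^{\otimes k}$, while $f$ projects the generators $s\Wedge V$ onto $s\Wedge_1V\cong V$, so the composition is the identity. This makes $p=gf$ an idempotent, and a short computation identifies its image with $\ker(\rho-\lambda)$, reconciling the two descriptions of the kernel.

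The heart of the argument is the homotopy identity $1-gf=[\delta,h]$. Since $[\delta,\rho]=0$ (immediate from the definition of $\rho$ as a degree-preserving derivation) and $[\delta,\lambda]=0$ by (d), the operator $\rho-\lambda$ commutes with $\delta$, and consequently so does $(\rho-\lambda)^{-1}$ on its domain. Combined with (a), this yields
\begin{equation*}
[\delta,h]=(\rho-\lambda)^{-1}[\delta,\xi]=(\rho-\lambda)^{-1}(\rho-\lambda)=1-p,
\end{equation*}
which is exactly $1-gf$. The side conditions are then routine: $h^2=(\rho-\lambda)^{-2}\xi^2=0$ by (c) together with the commutations in (d); $hg=0$ follows from the dual of the bar-side vanishing of $\xi$ on $\Wedge V\subset BSV$; and $fh=0$ because $h$ maps into the image of $1-p$, on which $f$ vanishes by the identification of kernels above.

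The main obstacle I anticipate lies not in this formal argument, which parallels the bar-side theorem step by step, but in the preliminary sign bookkeeping needed to legitimise the opposite-category translation. One must verify that the explicit formulas for $\xi$, $\lambda$, and the shuffle coproduct on $\Omega\Wedge V$ really do satisfy the identities (a)--(g) with signs matching those of their bar-symmetric counterparts; the author justifies this on the basis of the formal symmetry between symmetric algebras and exterior coalgebras in opposite categories of finite-dimensional cochain complexes, but in practice the signs are delicate, and a direct verification would involve a nontrivial amount of graded algebra.
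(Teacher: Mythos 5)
Your proposal is correct and follows essentially the same route as the paper: the paper likewise obtains the theorem by transporting the Section~4 argument through the opposite category of (finite-dimensional truncations of) cochain complexes, records the resulting dual identities (a)--(g), and then invokes the same formal argument ($[\delta,h]=(\rho-\lambda)^{-1}[\delta,\xi]=1-p$ together with $\xi^2=0$ and the spectral description of $\rho-\lambda$) to conclude. The sign bookkeeping you flag is exactly the point the paper also disposes of by the opposite-category observation rather than by direct computation.
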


The restriction $h_k:\Omega^{k+1}\Wedge V\to\Omega^k\Wedge V$ of the homotopy
$h$ to $\Omega^{k+1}\Lambda V$ (that is, corestriction to
$\Omega^k\Wedge V$) is again given by a finite sum
\begin{equation*}
  h_k = \sum_{j=0}^k (\rho)_{j+1}^{-1} (\lambda)_j\xi .
\end{equation*}

Now suppose that $L$ is an \Linf-algebra. The differential $\delta$ of
$CL$ is the sum of the differential $d$ induced by the differential on
the underlying cochain complex $L$ and the contribution of the
brackets $[v_1,\ldots,v_k]$, $k\ge2$. Denote the codifferential
$\delta-d$ of $CL$ by $\mu$: it is a Maurer--Cartan element in the dg Lie
algebra of coderivations of $\Wedge L$, and induces a Maurer--Cartan
element on $\Omega\Wedge L$, deforming it to the dg algebra
$\Omega CL$. Applying the formulas of homological perturbation theory to
the contraction $h$ from $\Omega\Wedge L$ to the subcomplex $SL$, we obtain
a contraction
\begin{equation*}
  h_* = h(1+\mu h)^{-1} : \Omega CL \to s^{-1}\Omega CL
\end{equation*}
from $\Omega CL$ to $SL$, with
\begin{equation*}
  f_* = f(1+\mu h)^{-1} : \Omega CL \to SL .
\end{equation*}
Since $\mu g=0$, the morphism $g=g_*$ of $SL$ into $\Omega CL$ and the
differential $d_*=d$ of $SL$ are not deformed.

By the tensor trick, we obtain a contraction $\h_*$ from $B\Omega CL$ to
$BSL$, which induces a codifferential on $BSL$, that is, an
\Ainf-algebra structure on $SL$. This \Ainf-algebra structure was
introduced by Baranovsky \cite{Baranovsky} (though without an explicit
choice of contraction $h$ from $\Omega\Wedge L$ to $SL$); he calls it the
universal enveloping \Ainf-algebra of the \Linf-algebra $L$.  In this
way, we obtain an analogue of the Poincar\'e--Birkhoff--Witt Theorem for
\Linf-algebras. In the case where $L$ is a dg Lie algebra, we now
identify this \Ainf-algebra structure on $SL$ with the usual
enveloping algebra, following Baranovsky.

The differential on $B\Omega CL$ is a sum
\begin{equation*}
  \delta_B + \delta_\Omega + \delta_1 + \mu ,
\end{equation*}
where $\delta_B$ is the codifferential on $B\Omega CL$ induced by the product of
$\Omega CL$, $\delta_\Omega$ is the differential on $\Omega CL$ induced by the coproduct
of $CL$, $\delta_1$ is induced by the differential on $L$, and $\mu$ is the
coderivation on $CL$ corresponding to the brackets $[x_1,\ldots,x_k]$,
$k\ge2$, on $L$. In applying the tensor trick, the contraction $\h$ is
associated to the complex $B\Omega CL$ with differential
$\delta_\Omega+\delta_1$, and it is perturbed by $\delta_B+\mu$.

Since $\delta_\Omega\g=\mu g=0$, the codifferential on $BSL$ induced by the tensor
trick is
\begin{multline}
  \label{UL}
  \f\bigl( \delta_\Omega+\delta_1 \bigr)\g + \f(1+(\delta_B+\mu)\h)^{-1}(\delta_B+\mu)\g \\
  = \delta_1 + \sum_{k=2}^\infty \f \bigl( - (1+\mu\h)^{-1}\delta_B \h \bigl)^{k-2}
  (1+\mu\h)^{-1}\delta_B\g .
\end{multline}
The $k$-linear bracket $m_k$ of the \Ainf-algebra structure induced on $SL$ by
this codifferential is contributed by the summand indexed by $k$.
\begin{lemma}
  \label{F}
  If $L$ is a dg Lie algebra, that is, $\delta_k=0$ for $k>2$, then this
  sum simplifies to
  \begin{equation*}
    \delta_1 + \f (1+\mu\h)^{-1}\delta_B \g .
  \end{equation*}
\end{lemma}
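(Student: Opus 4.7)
My plan is to introduce an integer-valued excess $E$ on $B\Omega CL$, track how each operator in the sum affects $E$, and exploit the fact that for a dg Lie algebra the perturbation $\mu=\delta_2$ drops $E$ by exactly one while the homotopy $\h$ raises it by exactly one. Given a bar--cobar monomial $[c_1|\cdots|c_n]$ in $B\Omega CL$ whose $i$-th bar entry is a cobar word $c_i=\<b^i_1|\cdots|b^i_{m_i}\>$ with $b^i_j\in\Wedge_{k^i_j}L$, I set
$E=\sum_{i,j}k^i_j-\sum_i m_i$,
the total wedge degree minus the total cobar length. Every operator below respects the bar-cobar grading, so this is a well-defined $\mathbb{Z}_{\geq 0}$-grading.

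First I would verify four structural properties under the dg Lie hypothesis. The morphism $\g:BSL\to B\Omega CL$ has image in $\{E=0\}$, since each cobar entry of $\g[x_1|\ldots|x_n]$ lies in $\Omega\Wedge_1L$. The coderivation $\delta_B$ preserves $E$, because it merges adjacent cobar words and fixes both totals. The morphism $\f:B\Omega CL\to BSL$ annihilates $\{E>0\}$: on each cobar entry, $f$ is an algebra morphism sending generators in $\Wedge_{\geq 2}L$ to zero, so any cobar entry whose wedge total strictly exceeds its cobar length is killed. Finally, $\mu=\delta_2$ drops $E$ by exactly one, since on $CL$ it maps $\Wedge_kL\to\Wedge_{k-1}L$, and it is extended to $\Omega CL$ as a derivation and to $B\Omega CL$ as a coderivation.

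The key step is to show that $\h$ strictly raises $E$ by one. On $\Omega\Wedge L$, the operator $\xi$ appearing in $h=(\rho-\lambda)^{-1}\xi$ combines two cobar slots $a_i,a_j$ into a single slot containing $\tau a_i\wedge a_j$; this is nonzero only when $a_i\in\Wedge_1L$, in which case the total wedge degree is preserved and the cobar length drops by one. The operator $(\rho-\lambda)^{-1}$ is diagonal with respect to the wedge grading and therefore preserves $E$. Lifted to $B\Omega CL$ by the tensor-trick formula, $\h$ places $h$ in one bar slot and the idempotent $\p=\g\f$ in earlier slots; because $\p$ vanishes on any cobar entry of positive excess, the surviving terms raise the total $E$ by exactly one. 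Consequently $\mu\h$ preserves $E$, and hence so does $(1+\mu\h)^{-1}$.

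Combining these observations, $(1+\mu\h)^{-1}\delta_B\g$ lands in $\{E=0\}$, while each additional factor $(1+\mu\h)^{-1}\delta_B\h$ raises $E$ by one. So for $k\geq 3$ the operator $\bigl(-(1+\mu\h)^{-1}\delta_B\h\bigr)^{k-2}(1+\mu\h)^{-1}\delta_B\g$ lands in $\{E=k-2\geq 1\}$, which is annihilated by $\f$. Only the $k=2$ summand survives, giving the claimed simplification. The most delicate point will be pinning down the behaviour of the $\p$-factors in the tensor-trick formula for $\h$: one has to check that terms in which some $\p$ is applied to an entry of positive excess drop out, so that $\h$ raises $E$ by exactly one rather than by a variable amount.
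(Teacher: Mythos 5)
Your proof is correct and is essentially the paper's own argument: your excess $E$ is precisely the degree attached to the decreasing filtration $F^kCL=\bigoplus_{\ell\ge k+1}C_\ell L$ that the paper introduces, and you verify the same four facts ($\h$ raises the degree by one, $\delta_B$ preserves it, $\mu$ lowers it, $\f$ annihilates positive degree). The paper phrases this with the filtration rather than the grading, which makes the ``exactly one'' bookkeeping you flag as delicate unnecessary --- only the inequalities are needed.
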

\begin{proof}
  There is a decreasing filtration on $CL$ by subspaces
  \begin{equation*}
    F^kCL = \bigoplus_{\ell\ge k+1} C_\ell L .
  \end{equation*}
  This induces decreasing filtrations on $\Omega CL$ and $B\Omega CL$. The
  operator $\h$ has degree $1$ for this filtration, the operator
  $\delta_B$ has degree $0$, the operator $\mu$ has degree $-1$ and the
  morphism $\f$ vanishes on $F^1B\Omega CL$. It follows that the operator
  $(1+\mu\h)^{-1}\delta_B \h$ raises filtration degree, and the result
  follows.
\end{proof}

It follows that the \Ainf-algebra structure on $SL$ induced by the
contraction on $B\Omega CL$ is a dg algebra structure: we denote the
resulting product by $x\ast y$, and this deformation of $SL$ by
$S_\ast L$. To identify the product on $S_\ast L$, we consider the
decreasing filtration on $\Omega CL$ induced by the decreasing filtration
\begin{equation*}
  G_kCL = \bigoplus_{\ell\le k} C_\ell L .
\end{equation*}
This filtration is preserved by $\h$ and $\delta_B$, and lowered by
$\mu$. The morphisms $f$ and $g$ are compatible with this filtration and
the decreasing filtration
\begin{equation*}
    G_kSL = \bigoplus_{\ell\le k} S^\ell L .
\end{equation*}
The contribution of the summand $\f\delta_B\g$ to the deformed
codifferential on $BSL$ corresponds to the original product on $SL$,
while the contribution of the remainder of the codifferential
\begin{equation*}
  \f(1+\mu\h)^{-1}\delta_B\g - \f\delta_B\g = - \f(1+\mu\h)^{-1}\mu\h\delta_B\g
\end{equation*}
maps $G_kSL$ to $G_{k-1}SL$. Thus, the induced product on $SL$ may be
characterized by its value on $[x|y]\in sL\otimes sL\subset B_2SL$. This is
calculated as follows:
\begin{multline*}
  \f(1+\mu\h)^{-1}\mu\h\delta_B\g[x|y] = \tfrac{1}{2} \f(1+\mu\h)^{-1}\mu\h\delta_B
  \Bigl( [\<x\>|\<y\>] + (-1)^{|x||y|} [\<y\>|\<x\>] \Bigr) \\
  \begin{aligned}
    &= \tfrac{1}{2} \f(1+\mu\h)^{-1}\mu\h \Bigl( (-1)^{|x|+1} [\<x|y\>] -
    (-1)^{|x||y|+|y|+1} [\<y|x\>] \Bigr) \\
    &= \tfrac{1}{2} \f(1+\mu\h)^{-1}\mu \Bigl( (-1)^{|x|+1} [\<x\wedge y\>] -
    (-1)^{|x||y|+|y|+1} [\<y\wedge x\>] \Bigr) \\
    &= \tfrac{1}{2} \f(1+\mu\h)^{-1} \Bigl( - [\<[x,y]\>] +
    (-1)^{|x||y|} [\<[y,x]\>] \Bigr) \\
    &= - \f [\<[x,y]\>] = - [x,y] .
  \end{aligned}
\end{multline*}
We have proved the following variant of the Poincar\'e--Birkhoff--Witt
theorem. (We have used a different normalization of the codifferential
on $CL$ to the one in Baranovsky's paper.)
\begin{theorem}
  \label{Baranovsky}
  Let $L$ be a dg Lie algebra. There is an isomorphism of dg algebras
  from the universal enveloping algebra $UL$ to $S_\ast L$, defined on
  $L\subset UL$ by $x\mapsto\frac12x$.
\end{theorem}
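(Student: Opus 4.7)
The plan is to apply the universal property of $UL$. By Lemma~\ref{F}, $S_\ast L$ is a genuine dg algebra, so it suffices to check that the linear map $\psi : L \to S_\ast L$ defined by $\psi(x) = \tfrac{1}{2}x$ is a morphism of dg Lie algebras when $S_\ast L$ is equipped with its graded commutator bracket. Compatibility with differentials is immediate since $d_\ast = d$ on $SL$ and $\psi$ is linear, so the only real content is compatibility with brackets.

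From the calculation immediately preceding the theorem, the binary component of the deformed codifferential on $BSL$ applied to $[x|y]$ for $x,y \in L$ decomposes as the original graded-commutative product $xy$ of $SL$ (from $\f\delta_B\g$) plus a correction extracted from the identity $\f(1+\mu\h)^{-1}\mu\h\delta_B\g[x|y] = -[x,y]$. Since $xy = (-1)^{|x||y|}yx$ in $SL$, the symmetric part cancels in the commutator and the antisymmetric correction doubles, yielding the key identity
\[
x\ast y - (-1)^{|x||y|}\, y\ast x \;=\; 2[x,y] \qquad \text{for } x,y \in L .
\]
Therefore
\[
\psi(x)\ast\psi(y) - (-1)^{|x||y|}\psi(y)\ast\psi(x) \;=\; \tfrac{1}{4}\bigl(x\ast y - (-1)^{|x||y|}\, y\ast x\bigr) \;=\; \tfrac{1}{2}[x,y] \;=\; \psi([x,y]),
\]
so $\psi$ is a morphism of dg Lie algebras. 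The universal property of $UL$ then produces a unique extension to a morphism of dg algebras $\Phi : UL \to S_\ast L$.

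To show $\Phi$ is an isomorphism, I would compare associated gradeds. Both sides carry natural increasing filtrations by word length: the PBW filtration $F_k UL$ of products of at most $k$ elements of $L$, and $F_k S_\ast L = \bigoplus_{\ell\le k} S^\ell L$. The filtration argument used in the proof of Lemma~\ref{F}, applied now to the $G$-filtration on $\Omega CL$, shows that the correction $x\ast y - xy$ strictly lowers filtration degree, so $\mathrm{gr}\, S_\ast L \cong SL$ as graded algebras with the original graded-commutative product. By classical PBW, $\mathrm{gr}\, UL \cong SL$ as well, and $\mathrm{gr}\,\Phi$ is the endomorphism of $SL$ induced by the rescaling $x\mapsto\tfrac{1}{2}x$ on generators, which is manifestly an automorphism. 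Hence $\Phi$ is an isomorphism.

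The main obstacle is the sign and normalization bookkeeping in the first step: correctly passing from the author's formula $\f(1+\mu\h)^{-1}\mu\h\delta_B\g[x|y] = -[x,y]$, interpreted on the symmetrized lift of $xy \in S^2L$ to $B_2SL$, to the clean commutator identity $x\ast y - (-1)^{|x||y|}y\ast x = 2[x,y]$. This factor of $2$ is exactly what forces the normalization $x \mapsto \tfrac{1}{2}x$ in the isomorphism; once it is pinned down, the remaining arguments---applying the universal property of $UL$ and comparing to classical PBW on associated gradeds---are routine.
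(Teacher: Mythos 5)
Your proposal is correct and follows essentially the same route as the paper, whose ``proof'' is the discussion preceding the theorem statement: Lemma~\ref{F} to see that $S_\ast L$ is a genuine dg algebra, the $G$-filtration to show the correction to the symmetric product lowers degree, and the computation of $\f(1+\mu\h)^{-1}\mu\h\delta_B\g[x|y]$ to pin down the commutator $x\ast y-(-1)^{|x||y|}y\ast x=2[x,y]$, which forces the normalisation $x\mapsto\tfrac12x$. You merely make explicit the two steps the paper leaves implicit --- the appeal to the universal property of $UL$ and the comparison of associated gradeds with classical PBW --- and both are carried out correctly.
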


\section{Application to the non-commutative Gauss--Manin
  connection}

Let $E=Ss^{-1}\F\cong\F\oplus\epsilon\F$, where $\epsilon$ is a variable of degree
$1$ and square zero. Let $\partial_\epsilon=\partial/\partial\epsilon$ be the graded derivation of
$\Lambda$, of degree $-1$:
\begin{equation*}
  \partial_\epsilon(a+\epsilon b) = b .
\end{equation*}
Denote by $L_\epsilon$ the dg Lie algebra $L\otimes E$. Extend
$\partial_\epsilon$ to a coderivation of the dg coalgebra $CL_\epsilon$.

Let $\F[u]$ be the polynomial algebra generated by an element $u$ of
degree $2$. Let
$\Omega_u CL_\epsilon=\bigl(\Omega CL_\epsilon\bigr)[u]$. Deform the differential of the dg
bialgebra $\Omega_uCL_\epsilon$ by the locally finite sum
\begin{equation*}
  \nu = \nu_1 + \nu_2 + \ldots ,
\end{equation*}
where
\begin{equation*}
  \nu_\ell\<a_1|\ldots|a_k\> = \sum_{j=1}^k (-1)^{\omega_{j-1}} u \<a_1|\ldots|
  \partial_\epsilon a_j^{(1)}|\ldots|\partial_\epsilon a_j^{(\ell)}|\ldots|a_k\> .
\end{equation*}
Write $\nu=\nu_1+\nu_+$. The differential of $\Omega_uCL_\epsilon$ is given by the
formula
\begin{multline*}
  \delta\<a_1|\cdots|a_k\> \\
  = \sum_{j=1}^k (-1)^{\omega_{j-1}} \<a_1|\cdots|da_j|\cdots|a_k\>
  + \sum_{j=1}^k (-1)^{\omega_j+1} \<a_1|\cdots|a_j^{(1)}|a_j^{(2)}|\cdots|a_k\> \\
  + u \sum_{\ell=1}^\infty \sum_{j=1}^k (-1)^{\omega_{j-1}}
  \<a_1|\ldots| \partial_\epsilon a_j^{(1)}|\ldots|\partial_\epsilon a_j^{(\ell)}|\ldots|a_k\> .
\end{multline*}
This is the dg algebra denoted
$B^{\text{tw}}\mathfrak{g}^\bullet[u,\epsilon]$ in \cite{Tsygan}.

In this section, using the methods of \S5, we construct an explicit
\Ainf{} quasi-iso\-morph\-ism from $UL$ to $\Omega_uCL_\epsilon$, reproving
Lemma~19 of \cite{Tsygan}: this proof yields an explicit formula for
the twisting cochain realizing the Gauss-Manin connection on periodic
cyclic homology whose existence was proved loc.\ cit.

Define an operator $\epsilon:CL_\epsilon\to CL_\epsilon$ of degree $1$ by the formula
\begin{equation*}
  \epsilon \bigl( x_1 \wedge \ldots \wedge x_n \bigr) = \sum_{i=1}^n
  (-1)^{|x_1|+\cdots+|x_{j-1}|+j-1} x_1 \wedge \ldots \wedge \epsilon x_j \wedge \ldots \wedge x_n .
\end{equation*}
Extend $\epsilon$ to an operator
$\epsilon:\Omega_uCL_\epsilon\to\Omega_uCL_\epsilon$ of degree $1$ by the formula
\begin{equation*}
  \epsilon \< a_1 | \ldots | a_k \> = \sum_{j=1}^k (-1)^{\omega_{j-1}+1} \<
  a_1 | \ldots | \epsilon a_j \mid \ldots | a_k \> .
\end{equation*}
We have $\epsilon\circ\partial_\epsilon+\partial_\epsilon\circ\epsilon=\rho$.

Let $\partial_u=\partial/\partial u$. The operator
\begin{equation*}
  \xi_\epsilon = \xi + \epsilon \partial_u
\end{equation*}
satisfies
\begin{equation*}
  \bigl( \delta_\Omega+\delta_1+\nu_1 \bigr) \xi_\epsilon + \xi_\epsilon \bigl( \delta_\Omega+\delta_1+\nu_1 \bigr) = \rho (
  u\partial_u + 1 ) + \epsilon \partial_\epsilon - \lambda .
\end{equation*}
Form the locally finite sum
\begin{equation*}
  h_\epsilon = \sum_{j=0}^\infty (\rho(u\partial_u+1)+\epsilon \partial_\epsilon)_{j+1}^{-1} (\lambda)_j\xi_\epsilon .
\end{equation*}
Let $p_\epsilon$ be the projection from $\Omega_uCL_\epsilon$ to $SL$, identified with
the zero eigenspace of $\rho(u\partial_u+1) + \epsilon \partial_\epsilon - \lambda$. Then we have
\begin{equation*}
  \bigl( \delta_\Omega+\delta_1+\nu_1 \bigr) h_\epsilon + h_\epsilon \bigl( \delta_\Omega+\delta_1+\nu_1 \bigr) = p_\epsilon ,
\end{equation*}

The associated map $g_\epsilon:SL\to\Omega_uCL_\epsilon$ is given by the same formula as
$g:SL\to\Omega CL$, and $f_\epsilon:\Omega_uCL_\epsilon\to SL$ is given by setting
$u=\epsilon=0$ and then applying the map $f:\Omega CL\to SL$. In this way, we obtain
a contraction $(\Omega_uCL_\epsilon,SL,h_\epsilon,f_\epsilon,g_\epsilon)$.

Let $B_u\Omega_uCL_\epsilon$ be the bar construction of
$\Omega_uCL_\epsilon$ as a dg algebra over $\F[u]$: we have
\begin{equation*}
  B_u\Omega_uCL_\epsilon = \bigoplus_{k=1}^\infty \bigl( s\Omega CL_\epsilon \bigr)^{\otimes k}[u] .
\end{equation*}
The differential on $B_u\Omega_uCL_\epsilon$ is a sum
\begin{equation*}
  \delta_B + \delta_\Omega + \delta_1 + \mu + \nu = \bigl( \delta_\Omega+\delta_1+\nu_1 \bigr) + \bigl(
  \delta_B+\mu+\nu_+ \bigr) .
\end{equation*}
Since $\mu\g_\epsilon=\nu\g_\epsilon=0$, the codifferential on $BSL$ induced by the
tensor trick takes the form
\begin{multline*}
  \f_\epsilon\bigl( \delta_\Omega + \delta_1 + \nu_1 \bigr) \g_\epsilon +
  \f_\epsilon(1+(\delta_B+\mu+\nu_+)\h_\epsilon)^{-1}(\delta_B+\mu+\nu_+)\g_\epsilon \\
  = \delta_1 + \sum_{k=2}^\infty \f_\epsilon \Bigl( - (1+(\mu+\nu_+)\h_\epsilon)^{-1}\delta_B\h_\epsilon
  \Bigl)^{k-2} (1+(\mu+\nu_+)\h_\epsilon)^{-1}\delta_B\g_\epsilon .
\end{multline*}
A similar argument to Lemma~\ref{F} shows that this sum simplifies to
\begin{equation*}
  \delta_1 + \f (1+\mu\h)^{-1}\delta_B \g .
\end{equation*}
Theorem~\ref{Baranovsky} identifies the resulting \Ainf-algebra
structure on $SL$ with the universal enveloping algebra $UL$. The
\Ainf{} quasi-isomorphism from $UL$ to $\Omega_uCL_\epsilon$ that proves Lemma~19
of \cite{Tsygan} is given by the morphism of dg coalgebras
\begin{equation*}
  \h_\epsilon(1+(\delta_B+\mu+\nu_+)\h_\epsilon)^{-1}\delta_B\g_\epsilon : BUL \to B_u\Omega_uCL_\epsilon ,
\end{equation*}
or equivalently, by the twisting cochain $BUL\to\Omega_uCL_\epsilon$ given by
composition with the universal twisting cochain $B_u\Omega_uCL_\epsilon\to\Omega_uCL_\epsilon$.

\clearpage

\section*{Appendix. An alternative contracting homotopy}

In this appendix, we present the contracting homotopy of Dippell et
al. \cite{DESW}. Because this construction is based on the Koszul
complex of the graded vector space $V$, we use the counital exterior
algebra $\Wedge V_+=\Wedge V\oplus\F$. The counit
$\epsilon:\Wedge V_+\to\F$ projects from $\Wedge V_+$ to
$\Wedge_0V\cong\F$, with basis vector $1$. We will construct a homotopy
$k$ from the counital bar complex $B_+SV=BSV\oplus\F$ to $\Wedge V_+$.

If $A$ is a dg algebra and $M$ is a left dg $A$-module, the bar
construction $B_+(A,M)$ of $A$ with coefficients in $M$ has underlying
graded vector space $B_+A\otimes M$. Its differential $\ddd$ the sum of the
differentials on $BA$ and $M$ together with a single additional term
capturing the left action of $A$ on $M$:
\begin{multline*}
  \ddd[a_1|\ldots|a_k] \otimes m \\
  \begin{aligned}
    &= \sum_{i=1}^{k-1} (-1)^{\omega_i+1} \, [a_1|\ldots|a_ia_{i+1}|\ldots|a_k] \otimes m
      + (-1)^{\omega_k+1} \, [a_1|\ldots|a_{k-1}] \otimes a_km \\
    &\quad + \sum_{i=1}^k (-1)^{\omega_{i-1}} \, [a_1|\ldots|da_i|\ldots|a_k] \otimes m
      + (-1)^{\omega_k} \, [a_1|\ldots|a_k] \otimes dm .
  \end{aligned}
\end{multline*}

Consider the left $A$-module $A^+=A\oplus\F$, where the action of
$a\in A$ on $A\subset A^+$ is left multiplication, while the action of
$a\in A$ on $\F\subset A^+$ takes $1\in\F$ to $a\in A\subset A^+$. Denote the
augmentation from $A^+$ to $\F$ with kernel $A$ by
$\epsilon:A^+\to\F$. Denote the projection $a-\epsilon(a):A^+\to A$ by $\overline{a}$.

The complex $B_+(A,A^+)$ is contractible, with contracting
homotopy
\begin{equation*}
  \hhh[a_1|\ldots|a_k] \otimes a = (-1)^{\omega_k+|a|} \, [a_1|\ldots|a_k|a-\epsilon(a)] \otimes 1
  .
\end{equation*}
We have
\begin{equation*}
  (\ddd\hhh+\hhh\ddd)[a_1|\ldots|a_k] \otimes a = [a_1|\ldots|a_k] \otimes a - \delta_{k,0} \,
  [~]\otimes\epsilon(a) .
\end{equation*}

The Koszul complex of $V$ is the left $SV$-module
$SV^+\otimes\Wedge V_+$. Its differential is
\begin{equation*}
  \dd ( a \otimes b ) = (-1)^{|a|} \sum_\alpha ax^\alpha \otimes \iota_\alpha b ,
\end{equation*}
where $a\in SV^+$, $b\in \Wedge V^+$, and $\iota_\alpha$ is the linear operator of
degree $-|x^\alpha|-1$ given by the formula
\begin{equation*}
  \iota_\alpha \bigl( sx^{\alpha_1} \ldots sx^{\alpha_q} \bigr) = \sum_{i=1}^q
  (-1)^{(|x^\alpha|+1)(|x^{\alpha_1}|+\cdots+|x^{\alpha_{i-1}}|+i-1)} \, \delta^{\alpha_i}_\alpha \,
 \bigl( sx^{\alpha_1} \ldots \widehat{sx}{}^{\alpha_i} \ldots sx^{\alpha_q} \bigr) .
\end{equation*}
The Koszul complex is contractible, with contracting homotopy
\begin{equation*}
  \hh ( a \otimes b ) = \rho^{-1} \sum_\alpha (-1)^{|a|(|x^\alpha|+1)} \, \partial_\alpha a \otimes sx^\alpha b ,
\end{equation*}
where $\rho=p+q$ on $S^pV\otimes\Wedge_qV$. We have
\begin{equation*}
  (\dd\hh+\hh\dd) ( a \otimes b ) = \epsilon(a) \otimes \epsilon(b) .
\end{equation*}

The differential $\ddd$ on $B_+(SV,KV)$ equals
\begin{align*}
  \ddd [a_1|\ldots|a_k] \otimes a \otimes b
  &= \sum_{i=1}^{k-1} (-1)^{\omega_i+1} \, [a_1|\ldots|a_ia_{i+1}|\ldots|a_k] \otimes a \otimes b \\
  &+ (-1)^{\omega_k+1} \, [a_1|\ldots|a_{k-1}] \otimes a_ka \otimes b \\
  &+ \sum_{i=1}^k (-1)^{\omega_{i-1}} \, [a_1|\ldots|da_i|\ldots|a_k] \otimes a \otimes b \\
  &+ (-1)^{\omega_k} [a_1|\ldots|a_k] \otimes \Bigl( da \otimes b + (-1)^{|a|} \, a \otimes db
    \Bigr) .
\end{align*}
The homotopy
\begin{equation*}
  \hhh[a_1|\ldots|a_k] \otimes a \otimes b = (-1)^{\omega_k+|a|} \, [a_1|\ldots|a_k|a-\epsilon(a)] \otimes 1 \otimes b
\end{equation*}
on $B_+(SV,KV)$ satisfies
\begin{equation*}
  (\ddd\hhh+\hhh\ddd)[a_1|\ldots|a_k] \otimes a \otimes b = [a_1|\ldots|a_k] \otimes a \otimes b - \delta_{k0}
  [~]\otimes\epsilon(a)\otimes b .
\end{equation*}
We have morphisms of complexes
\begin{equation*}
  \begin{tikzcd}
    (\Wedge V_+,d) \rar{\ggg} & (B_+(SV,KV),\ddd) \rar{\fff} & (\Wedge
    V_+,d)
  \end{tikzcd}
\end{equation*}
given by the formulas $\ggg b = [~] \otimes 1 \otimes b$ and
\begin{equation*}
  \fff[a_1|\ldots|a_k] \otimes a \otimes b = \delta_{k0} \, \epsilon(a) \, b
\end{equation*}
such that $\ddd\hhh+\hhh\ddd = 1 - \ggg\fff$ and $\fff\ggg=1$. It is easily
checked that $\hhh^2=0$, $\fff\hhh=0$ and $\hhh\ggg=0$; thus, we obtain a
contraction
\begin{equation*}
  \begin{tikzcd}
    \hhh \,
    \rotatebox[origin=c]{270}{$\circlearrowright$} \, (B_+(SV,KV),\ddd)
    \arrow[shift left=0.25em]{r}{\fff} &
    (\Wedge V_+,d) \arrow[shift left=0.25em]{l}{\ggg}
  \end{tikzcd}
\end{equation*}

Via the isomorphism $B_+(SV,KV)\cong B_+SV \otimes KV$, the differential
$\dd$ on $KV$ induces a differential $\dd$ on $B_+(SV,KV)$,
\begin{equation*}
  \dd [a_1|\ldots|a_k] \otimes a \otimes b = (-1)^{\omega_k+|a|} \sum_\alpha [a_1|\ldots|a_k] \otimes ax^\alpha \otimes
  \iota_\alpha b .
\end{equation*}
The homotopy
\begin{equation*}
  \hh[a_1|\ldots|a_k] \otimes a \otimes b = (-1)^{\omega_k} \sum_\alpha
  (-1)^{|a|(|x^\alpha|+1)} \, [a_1|\ldots|a_k] \otimes \rho^{-1} ( \partial_\alpha a \otimes sx^\alpha b )
\end{equation*}
on $B_+(SV,KV)$ satisfies
\begin{equation*}
  (\dd\hh+\hh\dd)[a_1|\ldots|a_k] \otimes a \otimes b = [a_1|\ldots|a_k] \otimes \Bigl( a \otimes b -
  \epsilon(a) \otimes \epsilon(b) \Bigr) .
\end{equation*}
We have morphisms of complexes
\begin{equation*}
  \begin{tikzcd}
    (B_+SV,0) \rar{\gg} & (B_+(SV,KV),\dd) \rar{\ff} & (B_+SV,0)
  \end{tikzcd}
\end{equation*}
given by the formulas $\gg[a_1|\ldots|a_k] = [a_1|\ldots|a_k] \otimes 1 \otimes 1$ and
\begin{equation*}
  \ff[a_1|\ldots|a_k] \otimes a \otimes b = \epsilon(a) \epsilon(b) [a_1|\ldots|a_k] ,
\end{equation*}
such that $\dd\hh+\hh\dd = 1 - \gg\ff$ and $\ff\gg=1$. It is easily
checked that $\hh^2=0$, $\ff\hh=0$ and $\hh\gg=0$, and we obtain a
contraction
\begin{equation*}
  \begin{tikzcd}
    \hh \,
    \rotatebox[origin=c]{270}{$\circlearrowright$} \, (B_+(SV,KV),\dd)
    \arrow[shift left=0.25em]{r}{f} &
    (B_+SV,0) \arrow[shift left=0.25em]{l}{\gg}
  \end{tikzcd}
\end{equation*}

By the formula $\ddd\dd+\dd\ddd=0$, we may equally well think of $\dd$
as a perturbation of the differential $\ddd$ on $B(SV,KV)$, or of
$\ddd$ as a perturbation of the differential $\dd$.  Homological
perturbation theory applies to both perturbations, since both
$1+\dd\hhh$ and $1+\ddd\hh$ are invertible, and we obtain a pair of
contractions
\begin{equation*}
  \begin{tikzcd}[row sep=1em]
    \hhh_* \, \rotatebox[origin=c]{270}{$\circlearrowright$} \, (B_+(SV,KV),\ddd+\dd)
    \arrow[shift left=0.25em]{r}{\fff_*} &
    (\Wedge V_+,d) \arrow[shift left=0.25em]{l}{\ggg_*} \\
    \hh_* \, \rotatebox[origin=c]{270}{$\circlearrowright$} \, (B_+(SV,KV),\dd+\ddd)
    \arrow[shift left=0.25em]{r}{\ff_*} &
    (B_+SV,\delta) \arrow[shift left=0.25em]{l}{\gg_*}
  \end{tikzcd}
\end{equation*}
given by the formulas
\begin{align*}
  \hhh_* &= \hhh(1+\dd\hhh)^{-1} , &
  \fff_* &= \fff = \fff(1+\dd\hhh)^{-1} , &
  \ggg_* &= (1+\hhh\dd)^{-1}\ggg , \\
  \hh_* &= \hh(1+\ddd \hh)^{-1} , &
  \ff_* & = \ff = \ff(1+\ddd \hh)^{-1} , &
  \gg_* &= (1+\hh\ddd)^{-1}\gg .
\end{align*}
The formula $\fff\dd=0$ shows that $\fff_*=\fff$ and also that
$\fff\dd(1+\hhh\dd)^{-1}\ggg=0$, so the differential on $\Wedge V_+$ is
unperturbed. Similarly, the formula $\ff\ddd \hh=0$ shows that
$\ff\ddd(1+\hh\ddd)^{-1}\gg$ equals the differential $\delta$ on
$B_+SV$, and $\ff_*=\ff$.

The following result is due to Dippell et al. \cite{DESW} (see also
Meinrenken and Salazar \cite{MS}). Let $f_+:B_+SV\to\Wedge V_+$ and
$g_+:\Wedge V_+\to B_+SV$ be the counital versions of the morphisms
$f$ and $g$ from Section~4.
\begin{proposition}
  There is a weak contraction
  \begin{equation*}
    \begin{tikzcd}[column sep=3.5em]
      H = \ff_* \hhh_* \gg_* \,
      \rotatebox[origin=c]{270}{$\circlearrowright$} \, (B_+SV,\delta)
      \arrow[shift left=0.25em]{r}{f_+} &
      (\Wedge V_+,d) \arrow[shift left=0.25em]{l}{g_+}
    \end{tikzcd}
  \end{equation*}
\end{proposition}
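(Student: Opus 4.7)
The plan is to build the weak contraction by composing the two strong contractions that share the common middle complex $(B_+(SV,KV), \ddd+\dd)$: one to $(\Wedge V_+, d)$ via $(\fff_*, \ggg_*, \hhh_*)$ and one to $(B_+SV, \delta)$ via $(\ff_*, \gg_*, \hh_*)$. I would set
\begin{equation*}
  f_+' = \fff_* \gg_* : B_+SV \to \Wedge V_+ , \quad g_+' = \ff_* \ggg_* : \Wedge V_+ \to B_+SV ,
\end{equation*}
and $H = \ff_* \hhh_* \gg_*$. Each of these compositions is a chain map because each factor is, with respect to the relevant differential. The identification $f_+' = f_+$, $g_+' = g_+$ is obtained by expanding the Neumann series for $(1 + \hhh\dd)^{-1}$: the explicit formula that emerges for $\ggg_* b$ is the shuffle-antisymmetrized embedding of $b \in \Wedge V_+$ into $B_+(SV,KV)$, and $\ff_*$ then reads off the bar factor via $\ff[a_1|\ldots|a_k]\otimes a\otimes b = \epsilon(a)\epsilon(b)[a_1|\ldots|a_k]$, recovering the counital extension of Section~4's $g$. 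The analogous expansion handles $f_+$.

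With these identifications in hand, the main identity $g_+ f_+ = 1 - [\delta, H]$ follows by a direct calculation:
\begin{equation*}
  g_+ f_+ = \ff_* \ggg_* \fff_* \gg_* = \ff_*\bigl( 1 - [\ddd+\dd, \hhh_*] \bigr) \gg_* = 1 - [\delta, \ff_* \hhh_* \gg_*] = 1 - [\delta, H] ,
\end{equation*}
using $\ggg_* \fff_* = 1 - [\ddd+\dd, \hhh_*]$ from the first contraction, $\ff_* \gg_* = 1$, and the chain map properties $\ff_*(\ddd+\dd) = \delta \ff_*$ and $(\ddd+\dd)\gg_* = \gg_* \delta$ of the second contraction.

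The remaining identity $f_+ g_+ = 1$ is $fg = 1_{\Wedge V}$ from Section~4 extended counitally. One can also see it via the composition: $f_+ g_+ = 1 - [d, \fff_* \hh_* \ggg_*]$, and the auxiliary term $\fff_* \hh_* \ggg_*$ vanishes. Indeed, expanding both $(1+\hhh\dd)^{-1}\ggg$ and $(1+\ddd\hh)^{-1}$ as Neumann series, every resulting summand contains a factor of $\hh$ applied to an element whose $SV^+$-slot equals $1$ (produced either by $\ggg$ or by $\hhh$), and such a factor vanishes because $\partial_\alpha 1 = 0$ in the defining formula for $\hh$.

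The principal obstacle is the identification step: checking that the compositions $\fff_* \gg_*$ and $\ff_* \ggg_*$ produce precisely the counital extensions of Section~4's $f$ and $g$. This amounts to matching the shuffle-symmetric combinatorics coming out of the Neumann series with the explicit formulas from Section~4, with attention to the signs and to the conventions for the counital extensions; once this is done, the weak contraction identities follow formally from the composition formula and the side conditions $\hh \ggg = 0$ and $\hh \hhh = 0$.
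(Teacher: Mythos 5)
Your proposal is correct and follows essentially the same route as the paper: define $H=\ff_*\hhh_*\gg_*$, identify $f_+=\fff_*\gg_*$ and $g_+=\ff_*\ggg_*$, and obtain $g_+f_+=1-[\delta,H]$ by sandwiching the contraction relation $\ggg_*\fff_*=1-[\ddd+\dd,\hhh_*]$ between $\ff_*$ and $\gg_*$ and using the chain-map properties (the paper phrases this as computing $\delta H$, which is the same algebra). Your additional check that $\fff_*\hh_*\ggg_*=0$ via $\partial_\alpha 1=0$ is a sound, if optional, way to see $f_+g_+=1$.
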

\begin{proof}
  By straightforward calculation, we see that
  $f_+=\fff_* \gg_*=\fff(1+\hh\ddd)^{-1}\gg$ and
  $g_+=\ff_*\ggg_*=\ff(1+\hhh\dd)^{-1}\ggg$. We have
  \begin{align*}
    \delta H = \delta \bigl( \ff_* \hhh_* \gg_* \bigr)
    &= \ff_* (\ddd+\dd) \hhh_* \gg_* \\
    &= \ff_* ( 1 - \ggg_* \fff_*) \gg_* - \ff_* \hhh_* (\ddd + \dd) \gg_*
    = 1 - g_+f_+ - H \delta ,
  \end{align*}
  showing that $H$ gives a weak contraction.
\end{proof}

We close the appendix with an explicit formula for the homotopy
$H$. Using this formula, it may be proved that $Hg_+=0$, $f_+H=0$, and
$H^2=0$, and hence that $k$ defines a contraction from $B_+SV$ to
$\Wedge V_+$.  Since $\hh^2=0$ and $\hh\gg=0$, we have
\begin{equation*}
  \hh\ddd(-\hh\ddd)^{k-j} \gg = [\hh,\ddd](\hh\ddd)^{k-j} \gg .
\end{equation*}
It is easily checked that
\begin{multline*}
  [\hh,\ddd] [a_1|\ldots|a_k] \otimes a \otimes b \\
  = \sum_\alpha (-1)^{\omega_{k-1} + (|x^\alpha|+1)(|a_k|+|a|)} \, [a_1|\ldots|a_{k-1}] \otimes
  \rho^{-1} \bigl( (\partial_\alpha a_k)a \otimes sx^\alpha b \bigr) .
\end{multline*}
It follows that
\begin{multline*}
  (-\hh\ddd)^{k-j} \gg [a_1|\ldots|a_k] = (-1)^{\omega_k-\omega_j} \sum_{\alpha_{j+1},\ldots,\alpha_k}
  \prod_{j<p\le q\le k} \frac{(-1)^{(|x^{\alpha_p}|+1)|\partial_{\alpha_q}a_q|}}{\|a_q\|} \\
  [a_1|\ldots|a_j] \otimes (\partial_{\alpha_{j+1}}a_{j+1})\ldots(\partial_{\alpha_k}a_k) \otimes sx^{\alpha_{j+1}} \ldots
  sx^{\alpha_k} .
\end{multline*}
Let $\sgn(\pi)$ be the sign \eqref{sgn} determined by the Koszul sign
convention for the action of the symmetric group on $\Wedge_{k-j}V$.
We conclude that the homotopy $k$ is given by the explicit formula
\begin{align*}
  H &= \sum_{j=0}^k \ff \hhh(\dd\hhh)^{k-j} (\hh\ddd)^{k-j} \gg \\
    &= \sum_{j=0}^k (-1)^{\omega_j} \sum_{\alpha_{j+1},\ldots,\alpha_k} \prod_{j<p\le q\le k}
      \frac{ (-1)^{(|x^{\alpha_p}|+1)|\partial_{\alpha_q}a_q|} }{\|a_q\|}
      \sum_{\pi\in S_{k-j}} \sgn(\pi) \\
    & \qquad [a_1|\ldots|a_j| A_j |x^{\alpha_{j+\pi(1)}} | \ldots | x^{\alpha_{j+\pi(k-j)}} ] ,
\end{align*}
where
$A_j=(\partial_{\alpha_{j+1}}a_{j+1})\ldots(\partial_{\alpha_k}a_k) -
\epsilon\bigl((\partial_{\alpha_{j+1}}a_{j+1})\ldots(\partial_{\alpha_k}a_k)\bigr)$.

\begin{ack}
  We thank Alex Karapetyan and Boris Tsygan for sharing their insights
  on operations acting on the bar construction, and Jim Stasheff,
  Bruno Vallette and the referee for helpful comments on earlier
  versions.
\end{ack}

\clearpage

\end{document}